\newtheorem{theoremA}{Theorem}
\renewcommand{\thetheoremName}
\newtheorem{proposition[[]]}[theoremName]{Proposition G}
\newtheorem{theorem}{Theorem}[section]
\newtheorem{lemma}[theorem]{Lemma}
\newtheorem{proposition}[theorem]{Proposition}
\newtheorem{corollary}[theorem]{Corollary}
\theoremstyle{definition}
\newtheorem{definition}[theorem]{Definition}
\newtheorem{example}[theorem]{Example}
\newtheorem{remark}{Remark}
\numberwithin{equation}{section}
\newcommand{\Hess}{\operatorname{Hess}}
\newcommand{\dist}{\operatorname{dist}}
\newcommand{\kan}{\mathbb{M}^{n}(\kappa)}
\newcommand{\kam}{\mathbb{M}^{m}(\kappa)}
\newcommand{\erre}{\mathbb{R}}
\newcommand{\Han}{\mathbb{H}^n(\kappa)}
\newcommand{\Ha}{\mathbb{H}}
\newcommand{\Ham}{\mathbb{H}^m(\kappa)}
\newcommand{\vol}{\textrm{vol}}
\newcommand{\E}{\mathcal{E}}
\begin{document}

  \title[Asymptotically extrinsic tamed]{Asymptotically extrinsic tamed submanifolds}

\author{G. Pacelli Bessa}      
\address{Departamento de Matem\'atica,  Universidade do Cear\'a-UFC, Fortaleza, CE, Brazil                        %  \\
%             \emph{Present address:} of F. Author  %  if needed
}

\email{bessa@mat.ufc.br}

\author[V. Gimeno]{Vicent Gimeno*}
\address{Departament de Matem\`{a}tiques- IMAC,
Universitat Jaume I, Castell\'{o}, Spain.}
\email{gimenov@uji.es}
\author[V. Palmer]{Vicente Palmer**}
\address{Departament de Matem\`{a}tiques- INIT,
Universitat Jaume I, Castellon, Spain.}
\email{palmer@mat.uji.es}
%\thanks{}
\thanks{* Work partially supported by the Research Program of University Jaume I Project P1-1B2012-18, and DGI -MINECO grant (FEDER) MTM2013-48371-C2-2-P}
\thanks{**Work partially supported by the Research Program of University Jaume I Project P1-1B2012-18,  DGI -MINECO grant (FEDER) MTM2013-48371-C2-2-P, and Generalitat Valenciana Grant PrometeoII/2014/064 }
\keywords{Volume growth, End, Extrinsic distance, Second fundamental form, Gap theorem, Tamed submanifold}
\subjclass[2010]{Primary 53C20, 53C40; Secondary 53C42}

%\authorrunning{Short form of author list} % if too long for running head

%xxxxxxxxxxxxxxxxx  Mod 2.5 xxxxxx

\begin{abstract} 
We study, from the extrinsic point of view, the structure at infinity of open submanifolds,  $\varphi\colon M^m \hookrightarrow \kan$ isometrically immersed in the real space forms of constant sectional curvature $\kappa \leq 0$.
We shall use the decay of the second fundamental form of the  the so-called {\em tamed} immersions to obtain a description at infinity of the submanifold in the line of the structural results in \cite{GPZ} and \cite{Petrunin2001} and an estimation from below of the number of its ends in terms of the volume growth of a special class of extrinsic domains, the {\em extrinsic balls}.
\end{abstract}

\maketitle

%%%%%%%%%%%%%%%%%%%
\section{Introduction}
%%%%%%%%%%%%%%%%%%%%%%

The geometry and the topology in the large of non-compact Riemannian manifolds is controlled by their curvature behavior at infinity, so that one can expect, if the manifold becomes nearly flat at infinity, i.e., out of the compact sets,  that it shares some esential features with the Euclidean space $\erre^n$. This fundamental idea, together with the analysis of asymptotically non-negative curved spaces, was developed in the seminal works \cite{GW1}, \cite{GW2}, \cite{Dre} and \cite{Abresch1985}.

In particular, it has been proved in \cite{GW1}, (resp. in \cite{GW2}), that a complete non-compact  Riemannian manifold $M^n$ with zero sectional curvature outside a compact set, (resp. with non-negative curvature outside a compact set), contains another compact $K \subseteq M$ such that $M \setminus K$ is a finite union of \lq\lq conical ends", each of the form $N \times \erre^+$, being $N$ a connected and compact ($n-1$)-dimensional manifold.

From this point of view, it seems natural to think that it is possible to extract some similar description at infinity of a Riemannian manifold  by replacing the flatness of the manifold outside a compact set by a weaker hypothesis. For instance, we can assume, as in  \cite{GPZ}, that the Riemannian manifold $M$ has {\em faster-than-quadratic-curvature-decay}, namely, that there exists some $\epsilon >0$ and some $c>0$ such that 
$$\vert K_x\vert < c \cdot\rho_M(x)^{-(2+\epsilon)},\,\,\forall x\,\, \text{with} \,\, \rho_M(x) >1$$
 where $K_x$ is the supremum of sectional curvatures of the tangent $2$-planes of $T_xM$ and $\rho_M(x)=\text{dist}_{M}(x_0,x)$ is the distance to a fixed base point $x_0 \in M$. 
 
 Then, it was proved in   \cite[Thm.1]{GPZ} that if
$M$ is a complete, connected and non-compact Riemannian manifold with {\em faster-than-quadratic-curvature-decay}, the manifold  contains a connected open subset $D \subseteq M$ with compact closure and smooth boundary such that the complement $M \setminus D$ is a finite union of \lq\lq conical ends"  $M_i\equiv N_i\times \erre^+$ described as before.  Moreover, if the tangent bundle of each $N_i$ is non-trivial and its fundamental group is finite, the volume growth of the conical end $M_i$ has Euclidean order.

A slightly more general concept   is the notion of {\em asymptotically flateness}. We say that a complete non-compact Riemannian manifold $(M,g)$ is said to be \emph{asymptotically flat} if
\begin{equation*}
A(M)=\limsup_{\rho_M(x)\to\infty}\left\{\vert K_x\vert\cdot\rho^2_M(x)\right\} =0,
\end{equation*}
\noindent 
being $\vert K_x\vert$  and $\rho_{M} (x) = \text{ dist}_{M}(x_0, x)$ as before. One easily checks that $A(M)$ does not depend on the choice of the base point $x_0$ and $A(M)$ is invariant under rescalings of the metric.  

Assuming that the manifold $M$ has  cone structure at infinity, namely, that the pointed Gromov-Hausdorff  limit of a decreasing-to-zero sequence of re-escaled metrics on $(M, g)$ is a metric cone $C$ with vertex $o$, and is assymptoticaly flat, A. Petrunin and W. Tuschmann proved in \cite{Petrunin2001}  an structural result in the line of   \cite[Thm.1]{GPZ}, namely, that there exists an open ball $B_R(p)\subset M$ such that $M\setminus B_R(p)$ is a disjoint union $\cup_iN_i$ of a finite number of ends, \emph{i.e.}, $N_i$ is a connected topological manifold with closed boundary $\partial N_i$ which is homeomorphic to $\partial N_i\times [0,\infty)$. Moreover,  if the end $N_i$ is simply connected, then $N_i$ is homeomorphic to $\mathbb{S}^{m-1}\times [0,\infty)$.

 Note that non-compact manifolds with {\em faster-than-quadratic-curvature-decay} or with  with non-negative curvature has cone structure at infinity, see \cite{Kasue} and \cite{Petrunin2001}.

 We are going, in this paper, to study the structure at infinity of complete non-compact Riemannian manifold   isometrically immersed $\varphi\colon M^m \hookrightarrow \kan$, in the real space forms of constant sectional curvature $\kappa \leq 0$, from an extrinsic point of view. We shall use hence an extrinsic approach, preserving an {\em extrinsic curvature decay condition} satisfied by the so-called {\em tamed} immersions (see Definition \ref{tamed}), given in terms of two extrinsic invariants $a(M)$ and $b(M)$. These invariants describes the decay of the second fundamental form $\alpha$ of the submanifold $M$. We ignore in this extrinsic context the existence of the cone structure at infinity, to obtain a description at infinity of the submanifold in the line of the structural results in \cite{GPZ} and \cite{Petrunin2001} and estimating from below the number of its ends in terms of the volume growth of an special class of extrinsic domains, the {\em extrinsic balls} $D_t(o)= \varphi^{-1}(B^{\kan}_{t}(o))$, where $B^{\kan}_{t}(o)$ denotes the open geodesic ball
of radius $t$ centered at the pole  $o\in\kan$, (see Definition \ref{ExtBall} in Subsection \ref{extdist}).
 
 \begin{theorem}\label{Euclidean1}
Let $\varphi\colon M^m
\hookrightarrow \erre^n$ be an isometric immersion of a complete Riemannian
$m$-manifold $M$ into a $n$-dimensional Euclidean space $\erre^n$. Let $$a(M)= \limsup_{\rho_M(x)\to\infty} \rho_M(x)\Vert \alpha (x)\Vert.$$
\begin{enumerate}
\item If  $a(M) < 1$ then the immersion $\varphi \colon M^m
\hookrightarrow \erre^n$ is proper and $M$ has finite topology. In particular $M$ has finitely many ends, each one of finite topological type. Moreover, there exist an open extrinsic ball $D_R(o)\subset M$ so that $M\setminus D_R(o)$ is a disjoint union $\cup_iV_i$  of ends, and each end $V_i$ is homeomorphic to $\partial V_i \times [0, \infty)$.
\item[]
\item If $m\geq 3$ and    $ a(M) <\frac{1}{2}$   then the (finite) number of ends $\E(M)$ is bounded from below by the volume growth of the extrinsic spheres,
\begin{equation*}
\liminf_{t\to\infty}\frac{\vol(\partial D_t)}{m\omega_mt^{m-1}}\leq \frac{\E(M)}{\left(1-4a(M)^2\right)^\frac{m-1}{2}}
\end{equation*}
and by the volume growth of the extrinsic balls,
\begin{equation*}
\liminf_{t\to\infty}\frac{\vol(D_t)}{\omega_mt^{m}}\leq \frac{1}{\left(1-a(M)^2\right)^{\frac{1}{2}}}\frac{\E(M)}{\left(1-4a(M)^2\right)^\frac{m-1}{2}}\cdot
\end{equation*}

\item If $m\geq 3$, $m$ is odd and 
$$
a(M) <\left[\frac{23-\sqrt{337}}{32}\right]^\frac{1}{2}\approx 0.38,
$$
\noindent then every end $V_i$ is  homeomorphic to $\mathbb{S}^{m-1}\times [0,\infty)$. The  homeomorphism can be strengthened to diffeomorphism if $m\geq 5$.

\item If $m\geq 3$, $m$ is even, 
$$
a(M) <\left[\frac{23-\sqrt{337}}{32}\right]^\frac{1}{2}\approx 0.38,
$$
\noindent and   $ V_i$  is simply connected, then $V_i$ is  homeomorphic to $\mathbb{S}^{m-1}\times [0,\infty)$. The  homeomorphism can be strengthened to diffeomorphism if $m\geq 6$.

\end{enumerate}
\end{theorem}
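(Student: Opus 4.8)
The backbone of all four statements is the Hessian comparison for the \emph{extrinsic} distance $r=\dist_{\erre^n}(o,\cdot)$ via the Gauss formula. Writing $r|_M=r\circ\varphi$, for $X\in T_xM$ one has
$\Hess^M(r|_M)(X,X)=\tfrac1r\big(\abs{X}^2-\langle\nabla^M r,X\rangle^2\big)+\langle\bar\nabla r,\alpha(X,X)\rangle$,
hence $\Hess^M\!\big(\tfrac12\,(r|_M)^2\big)(X,X)\ge\big(1-r\,\|\alpha\|\big)\abs{X}^2$. Since $r\le\rho_M$, fixing $\theta$ with $a(M)<\theta<1$ there is $R_0$ with $r\,\|\alpha\|\le\rho_M\,\|\alpha\|<\theta$ once $\rho_M(x)>R_0$, so $\tfrac12\,(r|_M)^2$ is uniformly convex off the intrinsic ball of radius $R_0$. \emph{Part (1):} along a unit‑speed minimizing geodesic $\gamma$ from $x_0$ (so $\rho_M(\gamma(s))=s$), the function $\psi(s)=\tfrac12\,r(\gamma(s))^2$ satisfies $\psi''(s)\ge 1-s\,\|\alpha(\gamma(s))\|\ge 1-\theta>0$ for $s\ge s_0$; integrating and bounding $r$ on the fixed compact ball of radius $s_0$ gives $r(\gamma(s))^2\ge(1-\theta)(s-s_0)^2-C_0\,s$ with $C_0$ independent of $\gamma$, so every divergent sequence escapes to $r=\infty$. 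Thus $\varphi$ is proper, every $\overline{D_t}$ is compact and $r|_M$ is a proper exhaustion. For $R$ large $r|_M$ has no critical points in $M\setminus\overline{D_R}$: a critical point is a point where $\bar\nabla r\perp T_xM$, where $\Hess^M(r|_M)$ is positive definite, hence a strict local minimum; but the uniform convexity of $\tfrac12\,(r|_M)^2$ there, together with the non‑compactness of each component of $M\setminus\overline{D_R}$, excludes such minima (this is the standard convexity/Morse argument). The gradient flow of $r|_M$ then yields a diffeomorphism $M\setminus D_R\cong\partial D_R\times[R,\infty)$; as $\partial D_R$ is compact with components $\Sigma_1,\dots,\Sigma_k$, the ends are $V_i\cong\Sigma_i\times[0,\infty)$ and $M$ deformation retracts onto the compact manifold‑with‑boundary $\overline{D_R}$, so it has finite topology.

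\emph{Part (2):} here $a(M)<\tfrac12$ is used to \emph{sharpen} the comparison. A self‑improving iteration of the Hessian estimate (using $r\,\|\alpha\|=(r/\rho_M)\,\rho_M\,\|\alpha\|$ and that $r/\rho_M$ is itself bounded below at infinity) produces the asymptotically sharp bound $\liminf_{\rho_M(x)\to\infty}\abs{\nabla^M r}^2\ge 1-a(M)^2$, together with the Laplacian identity $\Delta^M(r|_M)=\tfrac{m-\abs{\nabla^M r}^2}{r}+\langle\bar\nabla r,\vec H\rangle$ with $\|\vec H\|\le m\|\alpha\|$. On each end put $\Sigma_i(t)=\partial D_t\cap V_i$; combining the coarea formula $\tfrac{d}{dt}\vol(V_i\cap D_t)=\int_{\Sigma_i(t)}\abs{\nabla^M r}^{-1}$, the divergence identity $\int_{\Sigma_i(t)}\abs{\nabla^M r}=\int_{\Sigma_i(R)}\abs{\nabla^M r}+\int_{(D_t\setminus D_R)\cap V_i}\Delta^M(r|_M)$, and the sharpened bounds, one compares the resulting differential inequality with the Euclidean model $\R^m$ and integrates to obtain $\limsup_{t\to\infty}\frac{\vol(\Sigma_i(t))}{m\omega_m t^{m-1}}\le\big(1-4a(M)^2\big)^{-\frac{m-1}{2}}$ and $\limsup_{t\to\infty}\frac{\vol(V_i\cap D_t)}{\omega_m t^{m}}\le\big(1-a(M)^2\big)^{-1/2}\big(1-4a(M)^2\big)^{-\frac{m-1}{2}}$, the exponents $\tfrac{m-1}{2}$ coming from integrating the model ODE and the extra factor from $\abs{\nabla^M r}^{-1}$. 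Summing over the $\E(M)$ ends gives the two inequalities.

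\emph{Parts (3) and (4):} by Part (1) each end is, smoothly, $V_i\cong\Sigma_i\times[0,\infty)$ with $\Sigma_i$ a closed $(m-1)$‑manifold, which I would realize as an extrinsic sphere $\partial D_t\cap V_i$ for $t$ large. Applying the Gauss equation twice along $\Sigma_i\hookrightarrow M\hookrightarrow\erre^n$ expresses the sectional curvatures of $\Sigma_i$ through $\alpha$ (with $r\,\|\alpha\|<a(M)$) and through the second fundamental form of $\partial D_t$ in $M$, whose eigenvalues lie in $\tfrac1t\,[\,1-a(M),\,1+a(M)\,]$ by the Hessian comparison (up to the $\abs{\nabla^M r}\neq 1$ correction from Part (2)); after rescaling by $t^2$ one finds that $\Sigma_i$ carries a metric of positive sectional curvature which is $\delta$‑pinched, and $\delta>\tfrac14$ is equivalent to the quartic inequality $16\,a(M)^4-23\,a(M)^2+3>0$, i.e.\ to $a(M)<\big[\tfrac{23-\sqrt{337}}{32}\big]^{1/2}$. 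If $m$ is odd then $\dim\Sigma_i=m-1$ is even, so Synge's theorem makes $\Sigma_i$ simply connected, the topological sphere theorem of Berger and Klingenberg gives $\Sigma_i$ homeomorphic to $\mathbb{S}^{m-1}$ and hence $V_i\cong\mathbb{S}^{m-1}\times[0,\infty)$, and for $m\ge 5$ (so $\dim\Sigma_i\ge 4$) the differentiable sphere theorem (Brendle--Schoen) upgrades this to a diffeomorphism. If $m$ is even then $\dim\Sigma_i$ is odd and Synge does not apply, so one assumes $V_i$ (equivalently $\Sigma_i$) simply connected and argues identically, the differentiable conclusion now needing $\dim\Sigma_i\ge 4$, i.e.\ $m\ge 6$.

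The main obstacle is the quantitative bookkeeping, in two spots. First, in Part (2) one needs the \emph{sharp} lower bound $\abs{\nabla^M r}^2\ge 1-a(M)^2$ and the matching volume comparison: the one‑shot Hessian estimate only yields \emph{some} positive constant, and it is the self‑improving iteration that has to be set up carefully to output exactly $1-a(M)^2$ and $1-4a(M)^2$. Second, in Parts (3) and (4) the numerical threshold $\big[(23-\sqrt{337})/32\big]^{1/2}$ is precisely the boundary of the region where the Gauss‑equation estimates force genuine $\tfrac14$‑pinching of the cross‑sections, and extracting the quartic $16\,a(M)^4-23\,a(M)^2+3$ requires tracking all the curvature contributions, including the corrections due to $\abs{\nabla^M r}\neq 1$. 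Once these estimates are in hand, the topological conclusions are immediate invocations of Synge's theorem and of the topological and differentiable sphere theorems.
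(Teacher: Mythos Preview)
Your treatment of Parts (1), (3), and (4) matches the paper's approach: Part (1) is exactly Theorem~A (the tamed submanifold theorem), and for (3)--(4) the paper likewise computes the sectional curvature of the extrinsic sphere $\partial V_k(t)$ via the double Gauss formula, establishes strict $\tfrac14$-pinching when $a(M)$ is below the threshold (your quartic $16a^4-23a^2+3=0$ is exactly the equation the paper solves), and invokes Synge plus the topological/differentiable sphere theorems.

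The real gap is in Part (2). You propose to obtain the volume estimates from the Laplacian identity for $r|_M$, the divergence theorem, and an unspecified ``self-improving iteration'' for $\lvert\nabla^M r\rvert$, and you acknowledge that producing the precise constants $1-4a(M)^2$ and $1-a(M)^2$ is the obstacle. The paper's route is different and avoids this difficulty entirely: it uses the \emph{same} double-Gauss computation you employ in (3)--(4) (Proposition~2.3 in the paper) to get a sectional-curvature lower bound
\[
K_{\partial V_k(t)}\ \ge\ \frac{1}{t^2}\Bigl(1-2c^2-2c\bigl(\delta(t)+c\bigr)\Bigr)\ \longrightarrow\ \frac{1-4c^2}{t^2}
\]
on the compact $(m-1)$-manifold $\partial V_k(t)$, and then applies \emph{Bishop's volume comparison} directly to $\partial V_k(t)$. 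The factor $(1-4a(M)^2)^{-(m-1)/2}$ is precisely the Bishop scaling factor for a round sphere of the model curvature, and the extra $(1-a(M)^2)^{-1/2}$ comes from the coarea formula through $\lvert\nabla^M r\rvert^{-1}$. No Laplacian/divergence monotonicity argument is needed.

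Regarding the gradient bound: the paper does not use a self-improving iteration. It invokes Kasue's lemma (an ODE integration of the Hessian inequality along the gradient flow of $r|_M$), which for $G(t)=c/t$ gives $\lvert\nabla^\perp r\rvert\le \delta(t)+c\bigl(1-\tfrac{R_0}{t}\bigr)$ with $\delta(t)\to 0$; hence $\lvert\nabla^M r\rvert^2\ge 1-(c+\delta(t))^2\to 1-c^2$ directly. This is the source of both the $\lvert\nabla^\perp r\rvert$ term in the curvature estimate and the $(1-a(M)^2)^{-1/2}$ factor. So the missing idea in your Part (2) is: reuse the extrinsic-sphere curvature bound from (3)--(4), feed the Kasue bound for $\lvert\nabla^\perp r\rvert$ into it, and apply Bishop's theorem to the compact hypersurface $\partial V_k(t)$ rather than trying to run a Laplacian comparison on $M$.
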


The hyperbolic version of  Theorem \ref{Euclidean1} is the following theorem.

\begin{theorem}\label{Hyperbolic1}
Let $\varphi\colon \!M^m
\hookrightarrow \Han$ be an isometric immersion of a complete Riemannian
$m$-manifold $M$ into the $n$-dimensional Hyperbolic space $\Han$ with constant sectional curvature  $\kappa <0$. Set $$a(M)=\limsup_{\rho_M(x)\to\infty}\frac{1}{\sqrt{-\kappa}}\tanh(\sqrt{-\kappa}\cdot\rho_{_M}(x))\Vert \alpha (x)\Vert$$ and $$b(M)=\limsup_{\rho_M(x)\to\infty}\frac{1}{\sqrt{-\kappa}}\cosh(\sqrt{-\kappa}\cdot\rho_{_M}(x))\sinh(\sqrt{-\kappa}\cdot\rho_{_M}(x))\Vert \alpha (x)\Vert.$$   Then
\begin{enumerate}
\item If $a(M)<1$ then the immersion is proper and $M$ has finite topology. In particular $M$ has finite ends each one of finite topological type. Moreover, there exist an open extrinsic ball $D_R(o)\subset M$ such that $M\setminus D_R(o)$ is a disjount union $\cup_iV_i$ of a finite number of ends, and $V_i$ is homeomorphic to $\partial V_i \times [0, \infty)$.
\item[]

\item 
 If $b(M)<\infty$ and $m \geq 3$, then the (finite) number of ends $\E(M)$ are bounded from below by the volume growth of the extrinsic spheres
\begin{equation*}
\begin{aligned}
\liminf_{t\to \infty}\frac{\vol (\partial D_t)}{\vol(S_t^{\kappa,m-1})}\leq \E(M)
\end{aligned}
\end{equation*}
and by the  volume growth of the extrinsic balls
\begin{equation*}
\begin{aligned}
\liminf_{t\to \infty}\frac{\vol (D_t)}{\vol(B_t^{\kappa,m})}\leq \E(M).
\end{aligned}
\end{equation*}
where $B^{\kappa, m}_t$ and  $S^{\kappa, m-1}_t$ are the geodesic $t$-ball and the geodesic $t$-sphere of radius $t$ in $\Ham$ respectively. 
Moreover,  the fundamental tone $\lambda^*(M)$ is bounded from above by the fundamental tone $\lambda^*(\Ham)$ of the hyperbolic space $\Ham$, \emph{i.e.}, 
\begin{equation*}\label{tone-above}
\lambda^*(M)\leq \frac{-\left(m-1\right)^2\kappa}{4}
\end{equation*}
\end{enumerate}
\end{theorem}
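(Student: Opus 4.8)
The plan is to derive both parts from the Hessian comparison for the extrinsic distance. Write $r$ for the distance to the pole $o$ in $\Han$ and $\eta_\kappa(r)=\sqrt{-\kappa}\coth(\sqrt{-\kappa}\,r)$ for the mean curvature of the geodesic spheres. The formula for the Hessian of a function composed with an isometric immersion gives, for a unit vector $X\in T_xM$,
\[
\Hess^M(r\circ\varphi)(X,X)=\eta_\kappa(r)\bigl(1-\langle\grad^M(r\circ\varphi),X\rangle^2\bigr)+\langle\grad^{\Han}r,\alpha(X,X)\rangle,
\]
and, applied to $\cosh(\sqrt{-\kappa}\,r)$ (whose ambient Hessian is $-\kappa\cosh(\sqrt{-\kappa}\,r)$ times the metric),
\[
\Hess^M\!\bigl(\cosh(\sqrt{-\kappa}\,r\circ\varphi)\bigr)(X,X)=-\kappa\cosh(\sqrt{-\kappa}\,r)\,|X|^2+\sqrt{-\kappa}\sinh(\sqrt{-\kappa}\,r)\,\langle\grad^{\Han}r,\alpha(X,X)\rangle.
\]
The hypothesis $a(M)<1$ says exactly that $\|\alpha\|\le(a(M)+\epsilon)\,\eta_\kappa(r)$ once $r>R_0$, which makes the right-hand side of the second identity bounded below by $-\kappa(1-a(M)-\epsilon)\cosh(\sqrt{-\kappa}\,r)\,|X|^2>0$; thus $f:=\cosh(\sqrt{-\kappa}\,r\circ\varphi)$ is uniformly strictly convex outside the extrinsic ball $D_{R_0}$, the hyperbolic analogue of the strict convexity of $\tfrac12|\varphi|^2$ behind Theorem~\ref{Euclidean1}.

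For part (1) I would first establish, as in the proof of the corresponding assertion in Theorem~\ref{Euclidean1}(1), that $\varphi$ is proper, so that every extrinsic ball $D_t$ has compact closure. Next, feeding a unit-speed geodesic $\gamma$ of $M$ into the first displayed formula and putting $u=r\circ\varphi\circ\gamma$, one gets $u''\ge\eta_\kappa(u)\bigl(1-u'^2-(a(M)+\epsilon)\bigr)$ wherever $u>R_0$; an elementary phase-plane analysis then shows that once $u>R_0$ and $u'\ge0$ one has $u'>0$ and $u\to\infty$ thereafter. Applied to geodesics issuing from a critical point $x^*$ of $r\circ\varphi$ with $r(x^*)>R_0$ (where $\grad^M(r\circ\varphi)=0$, hence $u'(0)=0$), and combined with the surjectivity of $\exp_{x^*}$, this forces $r\circ\varphi\ge r(x^*)$ on all of $M$: such an $x^*$ must be the unique global minimum of $r\circ\varphi$. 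Hence, for $R$ larger than $R_0$ and than this exceptional value, $r\circ\varphi$ has no critical points in $M\setminus D_R$, so the flow of $\grad^M(r\circ\varphi)/|\grad^M(r\circ\varphi)|^2$ carries $M\setminus D_R$ diffeomorphically onto $\partial D_R\times[0,\infty)$; since $\partial D_R$ is a compact hypersurface it has finitely many components $\partial V_i$, the pieces $V_i\cong\partial V_i\times[0,\infty)$ are the ends, and $M$ is $\overline{D_R}$ with finitely many collars attached, so it has finite topology.

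For part (2), observe first that $b(M)<\infty$ forces $\|\alpha\|=O\bigl(\sqrt{-\kappa}\,/(\cosh(\sqrt{-\kappa}\,r)\sinh(\sqrt{-\kappa}\,r))\bigr)$, hence $a(M)=0$: the structure of part (1) is available, and moreover the mean curvature term is negligible next to $\eta_\kappa(r)$, so tracing the first Hessian formula gives $\Delta^M(r\circ\varphi)\ge(m-1)\eta_\kappa(r)\,(1-o(1))$. Combining this with the divergence theorem on $D_t$ and the coarea formula, exactly as for the volume estimates of Theorem~\ref{Euclidean1} but with $1/\rho$ replaced by $\eta_\kappa$ and an error term that is now integrable thanks to $b(M)<\infty$, one obtains that $\vol(\partial D_t)/\vol(S_t^{\kappa,m-1})$ and $\vol(D_t)/\vol(B_t^{\kappa,m})$ converge as $t\to\infty$. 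Decomposing $\partial D_t$ and $D_t$ along the $\E(M)$ ends from part (1), and using that on each end $\|\alpha\|$ decays fast enough that the end is asymptotic to a totally geodesic $\Ham$, the contribution of each end to these relative volumes tends to at most one, which yields the two stated inequalities. Finally, the bound $\vol(D_t)=O(\vol(B_t^{\kappa,m}))=O(e^{(m-1)\sqrt{-\kappa}\,t})$ just obtained controls the fundamental tone: inserting $f_j=e^{-\frac{m-1}{2}\sqrt{-\kappa}\,(r\circ\varphi)}\chi_j(r\circ\varphi)$ (with $\chi_j$ a radial cut-off at radius $j$) into the Rayleigh quotient and using $|\grad^M(r\circ\varphi)|\le1$ and the coarea formula, the numerator is at most $\tfrac{(m-1)^2(-\kappa)}{4}$ times the denominator up to a bounded error, while the denominator tends to infinity; letting $j\to\infty$ gives $\lambda^*(M)\le\tfrac{(m-1)^2(-\kappa)}{4}=\lambda^*(\Ham)$.

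The main obstacle is the properness invoked at the start of part (1): uniform convexity of $f$ does not by itself force properness on an arbitrary complete manifold — it only controls $f$ along minimizing geodesics that remain in the convexity region — and the hypothesis $a(M)<1$ says nothing about the second fundamental form inside the core $D_{R_0}$, so a genuine argument (carried out for Theorem~\ref{Euclidean1} and adapted here) is needed to rule out accumulation of $\varphi(M)$ there. Once properness is granted, the phase-plane argument, the collar construction, and the volume and test-function bookkeeping are all routine.
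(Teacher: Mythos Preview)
Your sketch for part~(1) is reasonable and in spirit matches what lies behind Theorem~A, which the paper simply cites; the properness obstacle you flag is real and is exactly what those references handle.

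The genuine gap is in part~(2). The Laplacian lower bound $\Delta^M(r\circ\varphi)\ge(m-1)\eta_\kappa(r)(1-o(1))$ combined with the divergence theorem gives a \emph{lower} bound on $\int_{\partial D_t}|\nabla^Mr|$, hence monotonicity-type \emph{lower} bounds on the relative volume---the wrong direction for what is claimed. You then try to close the argument by asserting that each end ``is asymptotic to a totally geodesic $\Ham$'' so that its contribution to the relative volume is at most one; but $b(M)<\infty$ does not imply this, and the paper neither proves nor uses any such asymptotic-subspace statement. (Compare Example~2.11: even with $b(M)<\infty$ the ends need not be hyperbolic subspaces.) Also, you refer to ``exactly as for the volume estimates of Theorem~\ref{Euclidean1},'' but those estimates are not obtained via Laplacian/divergence theorem either.

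The paper's route to the upper bound is entirely different: it works \emph{intrinsically on the extrinsic spheres}. Using the Gauss equation (Proposition~\ref{curvature-comp}) together with Kasue's lemma controlling $|\nabla^\perp r|$, one shows that for $t$ large the sectional curvature of each component $\partial V_k(t)$ satisfies
\[
K_{\partial V_k(t)}\ \ge\ \frac{\Lambda_c(t)}{S_\kappa(t)^2},\qquad \Lambda_c(t)\to 1,
\]
i.e.\ asymptotically the curvature of the geodesic sphere of radius $t$ in $\Ham$. Bishop's comparison (here is where $m\ge3$ enters, since $\dim\partial V_k(t)=m-1\ge2$) then yields $\vol(\partial V_k(t))\le \Lambda_c(t_1)^{-(m-1)/2}\vol(S_t^{\kappa,m-1})$, and summing over the $\E(M)$ ends gives the sphere inequality; the ball inequality follows by coarea. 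For the fundamental tone, rather than building test functions the paper simply feeds the resulting bound $\vol(B_t^M)\le\vol(D_t)=O(\vol(B_t^{\kappa,m}))$ into Grigor'yan's criterion $\lambda^*(M)\le\bigl(\limsup_{t\to\infty}\tfrac{1}{t}\log\vol(B_t^M)\bigr)^2/4$. Your test-function idea is viable, but it too needs the volume \emph{upper} bound you have not established.
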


We observe here that structural statement (1) in Theorems \ref{Euclidean1} and \ref{Hyperbolic1} comes directly from the following Theorem \ref{tamed-theorem},  first stated in \cite{Pac} for the case $\kappa=0$, in  \cite{Pac2} for the case $\kappa <0$ and then in \cite{GPGap} it was given an extension of it to complete ambient manifolds with a pole and bounded radial curvatures. Theorem  \ref{tamed-theorem} constitutes an extrinsic version of the structural assertion in  \cite[Thm.1]{GPZ} and  in \cite[Thm. A]{Petrunin2001} for the special class of submanifolds in $\kan$ called tamed submanifolds.

\begin{theoremA}[{\cite{Pac2, Pac, GPGap}}]\label{tamed-theorem}
Let $\varphi\colon \!M^m\! \hookrightarrow\! \kan$ be an isometric immersion  of a complete Riemannian
$m$-manifold $M$ into an  $n$-dimensional space form  $\kan$ with constant sectional curvature $\kappa\leq 0$. Let us suppose that
$$a(M)=\limsup_{\rho_M(x)\to\infty}\frac{1}{\sqrt{-\kappa}}\tanh(\sqrt{-\kappa}\rho_M(x))\Vert \alpha (x)\Vert <1$$ 

\noindent Then:
\begin{enumerate}
\item $\varphi$ is proper.
\item $M$ has finite topology.
\item There exist $R_0\in M$ such that the extrinsic distance function has no critical points in $M\setminus D_{R_0}$, where  $D_R(x_0)$ denotes the extrinsic ball of radius $R$ centered at $x_0\in M$.

\item In particular, $M\setminus D_{R_0}$ is a disjount union  $\cup_k V_k$ of finite number of ends. $M$ has so many ends $\E(M)$ as components $\partial D_{R_0}$ has , and each end $V_k$ is diffeomorphic to $\partial D_{R_0}^k\times [0,\infty)$, where $\partial D_{R_0}^k$ denotes the component of $\partial D_{R_0}$ which belongs to $V_k$.  
\end{enumerate}
\end{theoremA}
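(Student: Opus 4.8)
\smallskip
\noindent\emph{Plan of proof.} I would deduce all four assertions from one analytic fact about the \emph{modified extrinsic distance}
\[
u=G\circ r\colon M\to[0,\infty),\qquad r=\rho\circ\varphi,\qquad G(t)=\int_0^t\eta_\kappa(s)\,ds,
\]
where $\rho$ is the distance in $\kan$ to the pole $o$ and $\eta_\kappa$ is the space--form warping function ($\eta_\kappa(t)=\tfrac1{\sqrt{-\kappa}}\sinh(\sqrt{-\kappa}\,t)$ for $\kappa<0$, $\eta_0(t)=t$), chosen so that $G\circ\rho$ is a globally smooth function on $\kan$ with $\Hess_{\kan}(G\circ\rho)=\eta_\kappa'(\rho)\,\langle\cdot,\cdot\rangle$; in particular $u$ is smooth on all of $M$. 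The fact is: \emph{if $a(M)<1$, then $u$ is proper and $\Hess_M u\geq c\,\langle\cdot,\cdot\rangle$ outside a compact set, for some $c>0$.} Granting this, properness of $u$ is properness of $r$ and hence of $\varphi$ (assertion (1)); the uniform convexity traps the critical set of $r$ in a compact region, yielding a regular radius $R_0$ beyond which $r$ has no critical points (assertion (3)); and the classical argument for functions without critical points (deformation along the gradient flow) then gives (2) and (4).

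\smallskip
\noindent\emph{The Hessian estimate.} The Gauss formula gives, for $X\in T_xM$,
\[
\Hess_M u(X,X)=\eta_\kappa'(r)\,\vert X\vert^2+\eta_\kappa(r)\,\langle\nabla\rho,\alpha(X,X)\rangle\ \geq\ \bigl(\eta_\kappa'(r)-\eta_\kappa(r)\Vert\alpha\Vert\bigr)\vert X\vert^2.
\]
The decisive point is that $a(M)<1$ controls $\Vert\alpha\Vert$ by the ambient curvature scale $\sqrt{-\kappa}$ (by $1/\rho_M$ if $\kappa=0$), which makes this bracket positive \emph{for every value of $r$}, not only for $r$ large. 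Fixing $\varepsilon>0$ with $b:=a(M)+\varepsilon<1$, the definition of $a(M)$ yields $R_1$ with $\tfrac1{\sqrt{-\kappa}}\tanh(\sqrt{-\kappa}\rho_M)\Vert\alpha\Vert\leq b$ off the intrinsic ball $B^M_{R_1}(x_0)$; enlarging $R_1$ so that $\tanh(\sqrt{-\kappa}R_1)>b$ — and, when $\kappa=0$, also using the immersion triangle inequality $r\leq\rho_M+r(x_0)$ — one obtains $\Vert\alpha\Vert\leq b'\sqrt{-\kappa}$ off $K:=\overline{B^M_{R_1}(x_0)}$ with $b'<1$, whence $\eta_\kappa'(r)-\eta_\kappa(r)\Vert\alpha\Vert\geq\cosh(\sqrt{-\kappa}\,r)\bigl(1-b'\tanh(\sqrt{-\kappa}\,r)\bigr)\geq 1-b'=:c>0$ (the $\kappa=0$ case being the plain $1-b'>0$). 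Thus $\Hess_M u\geq c\,\langle\cdot,\cdot\rangle$ on $M\setminus K$.

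\smallskip
\noindent\emph{From convexity to the four conclusions.} For properness, take $p\in M$ with $\rho_M(x_0,p)=L>R_1$ and a minimizing unit--speed geodesic $\sigma\colon[0,L]\to M$ from $x_0$ to $p$; since $\sigma$ is minimizing it leaves $K$ at parameter $R_1$ and stays out afterwards, so $\phi=u\circ\sigma$ has $\phi''\geq c$ on $[R_1,L]$, and integrating twice gives $u(p)\geq\tfrac c2(L-R_1)^2-(\max_K\vert\nabla u\vert)(L-R_1)\to\infty$ as $L\to\infty$; hence $u$, and so $r=G^{-1}\circ u$ and $\varphi$, are proper and the extrinsic balls are precompact. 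For (3), off $K$ one has $\Hess_M u>0$, so every critical point of $u$ there is a strict local minimum; re--running the geodesic estimate (now $\phi'(L)=\langle\nabla u(x),\dot\sigma(L)\rangle=0$ at a critical point $x\notin K$, while $\phi'$ is strictly increasing on $[R_1,L]$) forces $u(x)<\max_K u$, so all critical points of $u$ — equivalently of $r$ on $\{r>0\}$ — lie in the compact set $\{u\le\max_K u\}$; for any $R_0$ above the supremum of $r$ over that set, $R_0$ is a regular value and $M\setminus D_{R_0}$ has no critical point of $r$. Finally, the flow of $\nabla r/\vert\nabla r\vert^2$ on $\{r\geq R_0\}$ is complete because the level sets $r^{-1}([R_0,t])$ are compact, and $r$ grows linearly along it, producing a diffeomorphism $M\setminus D_{R_0}\cong\partial D_{R_0}\times[0,\infty)$; the compact hypersurface $\partial D_{R_0}=r^{-1}(R_0)$ has finitely many components $\partial D_{R_0}^k$, each sweeping out an end $V_k\cong\partial D_{R_0}^k\times[0,\infty)$, so $\E(M)$ equals the number of these components, and $M$ deformation retracts along the flow onto the compact manifold with boundary $\overline{D_{R_0}}$ — giving (2) and (4).

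\smallskip
\noindent I expect the real obstacle to be assertion (1): the immersion is assumed only complete, not proper, so a priori an extrinsic ball $D_t$ need not be precompact and Morse theory on $r$ is unavailable, while the crude Hessian bound degenerates because $r$ may be far smaller than $\rho_M$. What rescues the argument is precisely that $a(M)<1$ bounds $\Vert\alpha\Vert$ by the \emph{constant} $\sqrt{-\kappa}$ uniformly in the extrinsic radius, which is exactly what is needed to make $G\circ r$ uniformly convex at infinity no matter how small $r$ itself is.
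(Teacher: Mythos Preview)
The paper does not actually prove Theorem~A; it is quoted from \cite{Pac,Pac2,GPGap} and used as a black box. Your argument is correct and is essentially the one given in those references: one shows that the modified extrinsic distance $u=G\circ r$ (with $G'=\eta_\kappa$, so that $\Hess_{\kan}(G\circ\rho)=\eta_\kappa'(\rho)\,g$) satisfies $\Hess_M u\geq c\,g>0$ outside an intrinsic ball, deduces properness by integrating along minimizing geodesics from the base point, and then applies standard Morse theory to $r$ once the extrinsic balls are known to be compact.

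Two small points of presentation. First, your sentence ``one obtains $\Vert\alpha\Vert\leq b'\sqrt{-\kappa}$ off $K$'' is only meaningful for $\kappa<0$; in the Euclidean case the correct statement (which you do use implicitly, via $r\leq\rho_M+r(x_0)$) is that $r\Vert\alpha\Vert\leq b'<1$ off a larger intrinsic ball, giving $\eta_0'(r)-\eta_0(r)\Vert\alpha\Vert=1-r\Vert\alpha\Vert\geq 1-b'$. Second, the lower bound $\cosh(\sqrt{-\kappa}\,r)\bigl(1-b'\tanh(\sqrt{-\kappa}\,r)\bigr)\geq 1-b'$ is true but not quite for the reason suggested: the left side equals $\cosh-b'\sinh$, which has minimum $\sqrt{1-b'^2}>1-b'$; the positivity you need is unaffected. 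With these cosmetic fixes your proof is complete.
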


In the main theorem of \cite{Petrunin2001}, above mentioned, it was also proved that if $M^m$, $m \geq 3$, has  cone structure at infinity, is asymptotically flat and  is simply connected with non-negative sectional curvature then $M$ is isometric to $\erre^m$.

This gap result for manifolds with non-negative sectional curvatures, gives a partial answer (assuming the additional hypothesis that the manifold has cone structure at infinity) to the problem posed by M. Gromov in \cite{BallGro}:
\medskip

{\em If $M$ is simply connected of dimension $n \geq 3$ and asymptotically flat with non-negative curvature, show that $M$ is isometric to $\erre^n$}.

\medskip

 Greene and Wu \cite{GW2}, adressed  this question when the manifold $M$ has a pole  showing that in this case and when $M$ has faster-than-quadratic-curvature-decay, the manifold is isometric to $\erre^n$. From an extrinsic  point of view,  Kasue and Sugahara \cite{KS}, established the following gap result:

\begin{theoremA}\label{kasue}(\cite{KS})

\medskip

(I) Let $\varphi \colon M^m\hookrightarrow \erre^n$ be a connected, non-compact Riemannian submanifold  properly immersed into $\erre^n$. Suppose that $M$ has one end and the second fundamental form of the immersion satisfies
$$ \sup \rho_M^{\alpha}(x)\Vert \alpha (x)\Vert < \infty$$
for a constant $\alpha >2$.

Then $M$ is totally geodesic if $2m>n$ and the sectional curvature is non-positive everywhere on $M$, or if $m=n-1$ and the scalar curvature is non-positive everywhere on $M$.
\medskip

(II) Let $\varphi \colon M^m \hookrightarrow \Han$ be a connected, non-compact Riemannian submanifold  properly immersed into $\Han$. Suppose that $M$ has one end and
$$ e^{2 \rho_M(x)}\Vert \alpha (x)\Vert \longrightarrow 0$$
as $x \in M$ goes to infinity. Then $M$ is totally geodesic if $2m>n$ and the sectional curvature is everywhere less than or equal to $\kappa$ or if $m=n-1$ and the scalar curvature is everywhere less than or equal to $m(m-1)\kappa$.
\end{theoremA}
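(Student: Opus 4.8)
The plan is to deduce the gap result from an asymptotic structure theorem at infinity, followed by a Bochner/maximum-principle argument fed by the Gauss equation.

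\emph{Step 1: asymptotic structure of the end.} Along a unit-speed geodesic ray $\gamma$ of $M$ the tangent plane $T_{\gamma(s)}M$ tilts at rate $\le\Vert\alpha(\gamma(s))\Vert$ and the ambient acceleration of $\gamma$ is $\alpha(\dot\gamma,\dot\gamma)$; since $\rho_M(\gamma(s))$ grows essentially linearly, the hypothesis $\sup\rho_M^{\alpha}\Vert\alpha\Vert<\infty$ with $\alpha>2$ makes $\int^{\infty}\!s\,\Vert\alpha(\gamma(s))\Vert\,ds<\infty$, so $\gamma$ stays at bounded distance from — and is in fact asymptotic to — an ambient straight line. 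Because $M$ has a single end, all these limiting lines fit together and one obtains, as in the proof of Theorem \ref{tamed-theorem} and in Kasue's work on asymptotically flat ends, that the end of $M$ is $C^{2}$-asymptotic to a half-end of a totally geodesic submanifold $P$ — an affine $\erre^{m}\subset\erre^{n}$ in case (I), a totally geodesic $\mathbb H^{m}\subset\Han$ in case (II) — with quantitative decay: $\dist(\cdot,P)$ and the tilt of $TM$ away from $TP$ are $O(\rho_M^{-(\alpha-2)})$, resp.\ exponentially small (this is where the weight $e^{2\rho_M}$ in (II), matching the $\cosh\sinh$-weight of Theorem \ref{Hyperbolic1}, is the right one, the ambient geodesics there spreading like $e^{\rho_M}$). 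After an ambient isometry take $P$ standard; the $n-m$ ambient normal coordinate functions on $M$ (resp.\ the corresponding hyperbolic functions measuring the displacement from $P$), call them $y=(y_{1},\dots,y_{n-m})$, then tend to $0$ at infinity together with their first covariant derivatives.

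\emph{Step 2: a differential inequality.} By the Gauss equation, $\Vert\alpha\Vert^{2}=|\vec H|^{2}-\operatorname{Scal}_M$ in case (I) and $\Vert\alpha\Vert^{2}=m(m-1)\kappa+|\vec H|^{2}-\operatorname{Scal}_M$ in case (II); thus the hypothesis on the scalar curvature gives $\Vert\alpha\Vert^{2}\ge|\vec H|^{2}$, and the hypothesis on the sectional curvature gives the stronger pointwise inequalities $\langle\alpha(E_i,E_i),\alpha(E_j,E_j)\rangle\le\Vert\alpha(E_i,E_j)\Vert^{2}$ for all orthonormal pairs. One feeds these into a Bochner/second-variation computation for a test function $\phi$ built from the $y_k$ (for the hypersurface case $m=n-1$) or from the tilt/angle function between $TM$ and $TP$ (for the $2m>n$ case), using $\Delta_M y_k=\langle\vec H,\partial_{y_k}\rangle$. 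The outcome should be: outside a compact set $\phi\ge0$, $\phi\to0$ at infinity, and $\Delta_M\phi\ge c\,\Vert\alpha\Vert^{2}-(\text{error})$, where the error is $o(1)$ times lower-order quantities absorbed by the decay of Step~1, and where the dimension restriction — codimension $n-m<m$ when $2m>n$, resp.\ the rank-one structure of $\alpha$ when $m=n-1$ — is exactly what renders the combinatorial coefficient $c>0$ after the curvature inequalities are summed.

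\emph{Step 3: conclusion, and the hyperbolic case.} Since the ambient curvature is constant and $\Vert\alpha\Vert$ is bounded, $M$ has bounded geometry and the Omori--Yau maximum principle applies; alternatively one integrates $\Delta_M\phi$ against a logarithmic cut-off, all boundary contributions at infinity vanishing by the decay rates of Step~1. Either way a nonnegative function tending to $0$ at infinity with $\Delta_M\phi\ge c\Vert\alpha\Vert^{2}-o(1)$ forces $\Vert\alpha\Vert\equiv0$ on the end. Finally $\alpha$ solves a second-order elliptic system of Simons/Codazzi type, so its vanishing on a nonempty open set propagates by unique continuation; as $M$ is connected, $\alpha\equiv0$ throughout and $\varphi$ is totally geodesic. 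The hyperbolic statement is proved by the identical scheme, only the radial weights being replaced by the hyperbolic-trigonometric ones. The crux of the whole argument is Step~1: turning the decay hypothesis into the statement that a \emph{single-ended} end with this rate is asymptotic to a \emph{linear} (totally geodesic) $P$, with explicit polynomial (resp.\ exponential) decay of $\dist(\cdot,P)$ and of the tilt — the borderline exponent $\alpha=2$, resp.\ the borderline $\cosh\sinh$-weight, only yielding a cone, so the strict inequality is essential and the estimates must be sharp. The secondary difficulty is the algebra of Step~2: choosing $\phi$ so that, after summing the Gauss-equation inequalities, the codimension (resp.\ hypersurface) hypothesis leaves a strictly positive coefficient on $\Vert\alpha\Vert^{2}$, while simultaneously controlling the error terms coming both from the curvature cross-terms and from integrating by parts over a non-compact end.
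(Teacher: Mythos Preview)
The paper does not prove this statement at all: Theorem~B is quoted verbatim from Kasue--Sugahara \cite{KS} as background, with no argument supplied. So there is no ``paper's own proof'' to compare against; the authors only use Theorem~B to motivate their own Theorem~\ref{gap1}, which has different hypotheses and a different (volume-comparison) proof.

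On its own merits, your outline has a genuine gap in Step~2. You write ``The outcome \emph{should be}: outside a compact set $\phi\ge 0$, $\phi\to 0$ at infinity, and $\Delta_M\phi\ge c\Vert\alpha\Vert^2-(\text{error})$'', but you never actually define $\phi$, never carry out the Bochner computation, and never verify that the codimension/hypersurface hypothesis produces a strictly positive constant $c$. This is the heart of the argument, and ``one feeds these into a Bochner/second-variation computation'' is not a proof---it is a plan with the hard part left blank. The actual Kasue--Sugahara argument is more concrete: after establishing the asymptotic model $P$ (your Step~1 is essentially correct in spirit), they use the Gauss map and an explicit algebraic lemma exploiting $2m>n$ (which forces nontrivial intersections of the eigenspaces of the shape operators) rather than a generic Bochner inequality. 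Your Step~3 unique-continuation claim is also unjustified: the second fundamental form does \emph{not} in general satisfy an elliptic system to which strong unique continuation applies, and you cannot conclude $\alpha\equiv 0$ on all of $M$ from $\alpha\equiv 0$ on an open set without further structure. In the original proof this issue does not arise because the argument is global from the outset.
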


We can state the following gap type result that improves Kasue-Sugahara's results in \cite{KS} and extends Greene-Wu's gap to submanifolds of Hyperbolic space. This theorem is proved as a corollary of the proofs of Theorems \ref{Euclidean1} and \ref{Hyperbolic1}.

\begin{theorem}\label{gap1}
Let $\varphi\colon\! M^m\! \hookrightarrow \kan$ be an isometric immersion of a complete Riemannian
$m$-manifold $M$, $m\geq 3$ into a $n$-dimensional space form  $\kan$ with constant sectional curvature $\kappa\leq 0$. Suppose that  $M$ is simply connected with sectional curvatures $K_M\leq \kappa$. Then

(a) If  $\kappa=0$ and $a(M)=0$, $M$ is isometric to $\erre^m$.

(b) If $\kappa< 0$ and $b(M)<\infty$, $M$ is isometric to $\Ham$.
\end{theorem}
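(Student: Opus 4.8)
The plan is to sandwich the volume of the extrinsic balls between the upper bound given by the volume growth estimates in Theorems~\ref{Euclidean1} and~\ref{Hyperbolic1} and the lower bound furnished by the Bishop--G\"unther comparison attached to the curvature condition $K_M\leq\kappa$, and then to extract the conclusion from the rigidity (equality) case of that comparison. As a preliminary reduction, observe that in both cases $a(M)=0$: in case (a) this is the hypothesis, while in case (b) one writes $\frac{1}{\sqrt{-\kappa}}\tanh(\sqrt{-\kappa}\,\rho_M)\Vert\alpha\Vert=\frac{1}{\cosh^2(\sqrt{-\kappa}\,\rho_M)}\bigl[\frac{1}{\sqrt{-\kappa}}\cosh(\sqrt{-\kappa}\,\rho_M)\sinh(\sqrt{-\kappa}\,\rho_M)\Vert\alpha\Vert\bigr]$, so $b(M)<\infty$ keeps the bracket bounded while the prefactor tends to $0$, forcing $a(M)=0<1$. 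Since $M$ is complete, simply connected and $K_M\leq\kappa\leq 0$, it is a Cartan--Hadamard manifold: the exponential map $\exp_{x_0}$ is a global diffeomorphism for each $x_0\in M$, so $M$ is diffeomorphic to $\erre^m$ and has exactly one end; and since $a(M)<1$, Theorem~\ref{tamed-theorem} gives that $\varphi$ is proper and $\E(M)=1$.

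Now fix $x_0\in M$, set $o=\varphi(x_0)$, and use $x_0$ also as base point for $\rho_M$. An isometric immersion does not increase the lengths of curves, so $\rho(x)=\dist_{\kan}(o,\varphi(x))\leq\dist_M(x_0,x)=\rho_M(x)$; hence the intrinsic ball $B^M_t(x_0)$ is contained in the extrinsic ball $D_t(o)$ and $\vol(D_t)\geq\vol(B^M_t(x_0))$. On the other hand, on the Cartan--Hadamard manifold $M$ the bound $K_M\leq\kappa$ and the Hessian comparison theorem give $\Hess\rho_M\geq h_\kappa(\rho_M)(g-d\rho_M\otimes d\rho_M)$ as bilinear forms, where $h_\kappa(r)=\sqrt{-\kappa}\coth(\sqrt{-\kappa}\,r)$ for $\kappa<0$ and $h_\kappa(r)=1/r$ for $\kappa=0$; integrating the trace inequality $\Delta\rho_M\geq(m-1)h_\kappa(\rho_M)$ over geodesic spheres yields the G\"unther inequality $\vol(B^M_t(x_0))\geq\vol(B^{\kappa,m}_t)$, and moreover the quotient $q(t)=\vol(B^M_t(x_0))/\vol(B^{\kappa,m}_t)$ is nondecreasing in $t$ with $\lim_{t\to0^+}q(t)=1$. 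Evaluating part~(2) of Theorem~\ref{Euclidean1} (resp.\ of Theorem~\ref{Hyperbolic1}) at $a(M)=0$ and $\E(M)=1$ gives $\liminf_{t\to\infty}\vol(D_t)/(\omega_m t^m)\leq 1$ (resp.\ $\liminf_{t\to\infty}\vol(D_t)/\vol(B^{\kappa,m}_t)\leq 1$), hence $\liminf_{t\to\infty}q(t)\leq 1$. Since $q$ is nondecreasing and $q\geq 1$, this forces $q\equiv 1$, i.e.\ equality in G\"unther's inequality for every $t>0$.

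Finally, the equality case provides the rigidity. Equality of volumes for all $t$ forces equality of the spherical area densities along every geodesic issuing from $x_0$, hence $\Delta\rho_M=(m-1)h_\kappa(\rho_M)$ on $M\setminus\{x_0\}$; combined with the matrix inequality $\Hess\rho_M\geq h_\kappa(\rho_M)(g-d\rho_M\otimes d\rho_M)$ and equality of traces, this gives $\Hess\rho_M=h_\kappa(\rho_M)(g-d\rho_M\otimes d\rho_M)$ on all of $M$. Therefore $g$ is a warped product $d\rho_M^2+\mathrm{sn}_\kappa(\rho_M)^2\,h$ over some metric $h$ on $\mathbb{S}^{m-1}$, where $\mathrm{sn}_\kappa(r)=\sinh(\sqrt{-\kappa}\,r)/\sqrt{-\kappa}$ for $\kappa<0$ and $\mathrm{sn}_\kappa(r)=r$ for $\kappa=0$; since $g$ is smooth at the pole $x_0$ and $\mathrm{sn}_\kappa(r)\sim r$ as $r\to0$, the metric $h$ must be the round one, and $M$ is isometric to $\kam$ --- that is, to $\erre^m$ in case (a) and to $\Ham$ in case (b). Equivalently, $u=\tfrac12\rho_M^2$ (resp.\ $u=\frac{1}{-\kappa}\cosh(\sqrt{-\kappa}\,\rho_M)$) then satisfies $\Hess u=g$ (resp.\ $\Hess u=-\kappa\,u\,g$) and has a nondegenerate minimum at $x_0$, so one may also appeal to the classical rigidity theorems of Tashiro and Obata--Kanai.

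I expect the main obstacle to be the equality case: one has to be careful that the G\"unther quotient $q(t)$ is genuinely nondecreasing under the \emph{upper} sectional curvature bound $K_M\leq\kappa$ (this uses the matrix Hessian comparison on a manifold without conjugate points), that the integral equality propagates to the pointwise identity $\Hess\rho_M=h_\kappa(\rho_M)(g-d\rho_M\otimes d\rho_M)$, and that the resulting warped product is globally the model metric, which is where smoothness of $M$ at the pole enters. A minor but necessary bookkeeping point is to keep the base point of $\rho_M$, $a(M)$ and $b(M)$ aligned with the center of the extrinsic balls, which is why we took $o=\varphi(x_0)$.
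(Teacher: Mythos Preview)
Your proof is correct and follows essentially the same route as the paper: sandwich the G\"unther lower bound $\vol(B_R^M)\geq\vol(B_R^{\kappa,m})$ (with its monotone quotient) against the extrinsic upper bound on $\vol(D_t)$, use $B_R^M\subset D_R$ to force the quotient to be identically~$1$, and invoke the rigidity in the equality case. The only cosmetic difference is that you cite the final inequalities in parts~(2) of Theorems~\ref{Euclidean1} and~\ref{Hyperbolic1} directly, whereas the paper quotes the intermediate estimates \eqref{ball-comp} and \eqref{ball-comp-2} from inside their proofs and then lets $t\to\infty$, $t_1\to\infty$, $c\to 0$.
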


Concerning the assertions (2) and (3) in Theorems \ref{Euclidean1} and \ref{Hyperbolic1}, V. Gimeno and V. Palmer in  \cite{GPGap}, proved  that there is a deep relation between the volume growth of the extrinsic spheres and the number of ends of extrinsic asymptotically flat   submanifolds of rotationally symmetric spaces. In the particular setting of minimal immersions of the Euclidean space they showed that 

\begin{theoremA}[See \cite{GPGap}]\label{GimPal2}
Let $\varphi\colon M^m
\hookrightarrow \erre^n$ be  an isometric and minimal immersion of a complete Riemannian
$m$-manifold $M$ into the $n$-dimensional Euclidean space $\erre^n$. If  $a(M)=0$ and $m\geq 3$,  the (finite) number of ends $\E(M)$ is bounded from below by
\begin{equation*}
\lim_{t\to\infty}\frac{\vol(D_t)}{\omega_mt^{m}}\leq \E(M),
\end{equation*}
If  $M$ has only one end then $M$ is isometric to $\erre^m$. 
\end{theoremA}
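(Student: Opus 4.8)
The plan is to combine the structural Theorem~\ref{tamed-theorem}, the monotonicity formula for minimal submanifolds, and an analysis of the ends made possible by the sharp decay $a(M)=0$.

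First I would invoke Theorem~\ref{tamed-theorem}, which applies because $a(M)=0<1$: $\varphi$ is proper, $M$ has finite topology, and there is $R_0$ so that the extrinsic distance $r=\vert\varphi\vert$ has no critical points on $M\setminus D_{R_0}$ and $M\setminus D_{R_0}=\cup_{k=1}^{\E(M)}V_k$, each $V_k$ diffeomorphic to $\partial D_{R_0}^k\times[0,\infty)$ with $\partial D_{R_0}^k$ a connected compact hypersurface; in particular each level set $\partial D_t^k$, $t>R_0$, is a connected closed $(m-1)$-manifold. Minimality then enters through the identity $\Hess^M\big(\tfrac12\vert\varphi\vert^2\big)=g+\langle\alpha,\varphi^\perp\rangle$ ($\varphi^\perp$ the normal part of the position vector), whose trace is $\Delta^M\vert\varphi\vert^2=2m$; the divergence theorem and the coarea formula on $D_t$, together with $\vert\nabla r\vert\le1$, give for $t>R_0$
\[
\frac{d}{dt}\!\left(\frac{\vol(D_t)}{t^{m}}\right)=\frac{1}{t^{m}}\int_{\partial D_t}\frac{1-\vert\nabla r\vert^{2}}{\vert\nabla r\vert}\,d\sigma\;\ge\;0,
\]
so $\vol(D_t)/(\omega_m t^m)$ is non-decreasing and its limit $\Theta\in(0,\infty]$ exists. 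The same computation works for extrinsic balls centered at any $p\in\varphi(M)$, where in addition the quotient tends, as the radius goes to $0$, to $\#\varphi^{-1}(p)\ge1$.

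The heart of the argument is to show that each end $V_k$ contributes density exactly one. Since $r\le\rho_M$, the hypothesis forces $r\Vert\alpha\Vert\le\rho_M\Vert\alpha\Vert\to0$, i.e. $\Vert\alpha\Vert=o(1/r)$. Inserting this into the radial Hessian comparison for $r$ along the integral curves of $\nabla r/\vert\nabla r\vert$ (nonsingular on $V_k$ by Theorem~\ref{tamed-theorem}) yields, with $\phi:=1-\vert\nabla r\vert^{2}\ge0$, the relation $\big|\tfrac{d\phi}{dr}+\tfrac{2}{r}\phi\big|=O(\Vert\alpha\Vert)=o(1/r)$; integrating shows $r^{2}\phi$ stays bounded, hence $\vert\nabla r\vert\to1$, $\vert\varphi^\perp\vert=o(r)$ on $V_k$, and $\Hess^M(\tfrac12\vert\varphi\vert^2)\to g$. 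Rescaling by $1/t$, it follows that $\tfrac1t\,\varphi(\partial D_t^k)$ is a connected closed $(m-1)$-submanifold of $\mathbb S^{n-1}$ whose second fundamental form relative to $\mathbb S^{n-1}$ tends to $0$; for $t$ large it is therefore $C^{1}$-close to some totally geodesic $(m-1)$-subsphere, and $\tfrac1t\varphi\colon\partial D_t^k\to\mathbb S^{m-1}$ is a covering map. Here $m\ge3$ is decisive: $\mathbb S^{m-1}$ is then simply connected, the covering is one-sheeted, and $\vol(\partial D_t^k)=(1+o(1))\,m\omega_m t^{m-1}$. With $\vert\nabla r\vert\to1$ and the coarea formula this gives $\vol(V_k\cap D_t)=(1+o(1))\,\omega_m t^{m}$; summing over the $\E(M)$ ends (the central piece $D_{R_0}$ being negligible) yields $\Theta=\E(M)$, in particular $\Theta\le\E(M)$.

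For the gap, suppose $\E(M)=1$, so $\Theta\le1$; since $\Theta$ is independent of the center and, by the centered monotonicity, $\Theta\ge\#\varphi^{-1}(p)\ge1$, we get $\Theta=1$ and the centered quotient is constant $\equiv1$. The equality case of the monotonicity formula then forces $\vert\nabla\tilde r\vert\equiv1$ for $\tilde r=\vert\varphi-p\vert$, i.e. $\varphi-p$ is everywhere tangent to $M$, so $\varphi(M)$ is a cone with vertex $p$; a smooth complete minimal cone in $\R^{n}$ is an affine $m$-plane, and the induced local isometry from the complete connected $M$ onto a plane is a covering of a simply connected manifold, hence a diffeomorphism, so $M$ is isometric to $\R^{m}$. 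The main obstacle is the third paragraph: passing from the decay of $\alpha$ (equivalently $\vert\nabla r\vert\to1$ and $\Hess(\tfrac12\vert\varphi\vert^2)\to g$) to the sharp estimate $\vol(\partial D_t^k)\sim m\omega_m t^{m-1}$. The decay alone pins down the level sets only up to an a priori unknown number of sheets over a great subsphere, and it is precisely $m\ge3$ --- through $\pi_{1}(\mathbb S^{m-1})=0$ --- that excludes multiple sheeting; making the ``$C^1$-close to a subsphere $\Rightarrow$ covering map'' step rigorous, uniformly in $t$ and allowing the limiting subsphere to drift with $t$, is the delicate technical point, while the remainder is the monotonicity formula and the radial comparison already present in \cite{Pac,Pac2,GPGap}.
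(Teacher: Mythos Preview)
Your argument is broadly sound, and the gap part (monotonicity for minimal submanifolds, density $\ge 1$ at every image point, equality case forcing a cone hence an affine plane) is the natural route and matches the spirit of \cite{GPGap}. For the inequality $\lim_{t\to\infty}\vol(D_t)/(\omega_m t^m)\le\E(M)$, however, you take a genuinely different path from the paper, and the comparison is instructive.

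The paper (in the proof of Theorem~\ref{Euclidean1}(2), which specializes to the present situation by letting $c\to 0$) never analyzes the asymptotic \emph{shape} of the slices $\partial V_k(t)$. Instead it bounds their \emph{intrinsic sectional curvature} from below via Proposition~\ref{curvature-comp} and Lemma~\ref{kasue}, obtaining $K_{\partial V_k(t)}\ge \Lambda_c^0(t_1)/t^{2}$ with $\Lambda_c^0(t_1)\to 1$ as $c\to 0$ and $t_1\to\infty$; a single application of Bishop's volume comparison then gives $\vol(\partial V_k(t))\le m\omega_m t^{m-1}/(\Lambda_c^0)^{(m-1)/2}$, and coarea yields the extrinsic-ball bound. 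The hypothesis $m\ge 3$ enters only because Bishop is applied on the $(m-1)$-dimensional manifold $\partial V_k(t)$. Minimality is used solely through the monotonicity formula, to upgrade $\liminf$ to $\lim$ and to run the rigidity.

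Your route aims at the stronger statement that each end contributes density \emph{exactly} one, by showing the rescalings $\tfrac{1}{t}\varphi(\partial D_t^k)$ converge in $C^1$ to great $(m-1)$-subspheres of $\mathbb{S}^{n-1}$; here $m\ge 3$ enters through $\pi_1(\mathbb{S}^{m-1})=0$ to exclude multi-sheeting. This is a legitimate strategy and indeed extracts more information, but --- as you yourself flag --- turning ``second fundamental form in $\mathbb{S}^{n-1}$ tends to $0$'' into ``one-sheeted covering of a great subsphere'', uniformly in $t$ and with a possibly drifting limit, requires a quantitative rigidity or compactness argument that you have not supplied. The paper's Bishop-based route sidesteps this entirely: it needs only a lower curvature bound on $\partial V_k(t)$, not convergence to any model, and delivers the inequality with no technical residue. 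If you want to keep your approach, the cleanest fix is to replace the covering argument by the curvature estimate of Proposition~\ref{curvature-comp} plus Bishop, which already gives $\vol(\partial V_k(t))\le (1+o(1))\,m\omega_m t^{m-1}$ directly.
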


Theorem \ref{GimPal2} shows a relation between the volume growth of the extrinsic balls and the number of ends, and moreover one deduce a gap type theorem first stated by A. Kasue and K. Sugahara in \cite{KS}. 

However, this gap result does not hold for minimal submanifolds of the Hyperbolic space, as we can see in the following example, given in \cite{Mari}.

\begin{example}
 In \cite{Mari}, the authors consider a minimal graph $M^{n} \subseteq \Ha^{n+1}$ over a bounded and regular domain $\Omega \subseteq \partial_{\infty}\Ha^{n+1}$, proving that $M$ has finite total (extrinsic) curvature i.e. $ \int_M \Vert \alpha_M \Vert^m d\sigma < \infty$. Then, applying Lemma 3.1 in the proof of \cite[Thm. A]{O}, we have that $\Vert \alpha_M(x)\Vert$ goes to zero when $\rho_M(x)$ goes to infinity and hence, $a(M)=0$.

We can conclude from this fact that, in the case of minimal submanifolds of Hyperbolic space, to be extrinsically asymptotically flat (i.e., to have the curvature decay $a(M)=0$) it is not enough to characterize the hyperbolic subspaces, justifying the introduction of the invariant $b(M)$ and the extrinsic curvature decay criterion $b(M)< \infty$. 
\end{example}

Taking into account the relation between the fundamental tone $\lambda^*(M)$ and the Cheeger isporimetric constant $\mathcal{I}(M)$ (see \cite[theorem 3, chap. IV]{Chavel} for instance), inequality (\ref{tone-above}) implies the following result for minimal immersions in the Hyperbolic space 

\begin{corollary}
Let $\varphi\colon\! M^m\!
\hookrightarrow\! \Han$ be an isometric immersion of a complete Riemannian
$m$-manifold $M$ into the $n$-dimensional Hyperbolic space $\Han$ with constant sectional curvature  $\kappa <0$ and let us suppose that $m\geq 3$ and $b(M)<\infty$. Then

 \begin{equation*}
\mathcal{I}(M)\leq (m-1)\sqrt{-\kappa}.
\end{equation*}

\end{corollary}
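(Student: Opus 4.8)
The plan is to derive this corollary directly from inequality (\ref{tone-above}) in Theorem \ref{Hyperbolic1} together with the classical Cheeger inequality relating the fundamental tone (bottom of the $L^2$-spectrum of the Laplacian) to the Cheeger isoperimetric constant. Recall that Cheeger's inequality states that for any complete Riemannian manifold $M$,
\begin{equation*}
\lambda^*(M) \geq \frac{\mathcal{I}(M)^2}{4},
\end{equation*}
where $\mathcal{I}(M) = \inf_\Omega \frac{\vol(\partial\Omega)}{\vol(\Omega)}$, the infimum being taken over precompact open domains $\Omega$ with smooth boundary. This is exactly the content of \cite[Theorem 3, Chap. IV]{Chavel} cited in the statement.

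The argument is then a one-line chain of inequalities. First, I would invoke Theorem \ref{Hyperbolic1}(2): under the hypotheses $m\geq 3$ and $b(M)<\infty$, we have
\begin{equation*}
\lambda^*(M) \leq \frac{-(m-1)^2\kappa}{4}.
\end{equation*}
Combining this with Cheeger's inequality yields
\begin{equation*}
\frac{\mathcal{I}(M)^2}{4} \leq \lambda^*(M) \leq \frac{-(m-1)^2\kappa}{4},
\end{equation*}
so that $\mathcal{I}(M)^2 \leq -(m-1)^2\kappa = (m-1)^2(-\kappa)$. Since $\kappa<0$, taking square roots gives $\mathcal{I}(M) \leq (m-1)\sqrt{-\kappa}$, which is the claimed bound.

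There is essentially no obstacle here, since all the real work has already been done in proving Theorem \ref{Hyperbolic1}; the corollary is a formal consequence. The only point requiring a small amount of care is to make sure the version of Cheeger's inequality being quoted uses the same normalization of $\mathcal{I}(M)$ (volume ratio of boundary to enclosed region) and the same convention for $\lambda^*(M)$ (infimum of the spectrum of $-\Delta$, equivalently the Rayleigh-quotient infimum $\inf_f \int_M|\nabla f|^2/\int_M f^2$ over compactly supported $f$) as in \cite{Chavel}; with the standard conventions the constant $1/4$ is exactly what appears. One may also remark, for context, that this recovers the sharp value $\mathcal{I}(\Ham) = (m-1)\sqrt{-\kappa}$ attained by the hyperbolic space itself, so the estimate is optimal.
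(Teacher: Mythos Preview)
Your proof is correct and is precisely the argument the paper intends: the corollary is stated immediately after the sentence ``Taking into account the relation between the fundamental tone $\lambda^*(M)$ and the Cheeger isoperimetric constant $\mathcal{I}(M)$ (see \cite[theorem 3, chap.\ IV]{Chavel} for instance), inequality (\ref{tone-above}) implies the following result\ldots'', so the paper's own proof is exactly the combination of Cheeger's inequality $\lambda^*(M)\geq \mathcal{I}(M)^2/4$ with the bound $\lambda^*(M)\leq -(m-1)^2\kappa/4$ from Theorem~\ref{Hyperbolic1}.
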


In the particular setting of minimal immersions of Hyperbolic space, using the lower bounds for the Cheeger constant and the fundamental tone for minimal submanifolds in $\Han$ given in \cite{GPCheeger}, we can state an improved version of the theorems  \cite[Thm. 1.1]{GPGap} and   \cite[Thm. B]{GimFT}.

\begin{corollary}
Let $\varphi\colon\! M^m\!
\hookrightarrow\! \Han$ be a minimal immersion of a complete Riemannian
$m$-manifold $M$ into the $n$-dimensional Hyperbolic space $\Han$ with constant sectional curvature  $\kappa <0$ and let us suppose that $m\geq 3$ and $b(M)<\infty$. Then
\begin{enumerate}
\item $M$ has finite topological type and  the (finite) number of ends $\E(M)$ is bounded from below by
\begin{equation*}
\begin{aligned}
\sup_{t\in\erre_+}\frac{\vol (D_t)}{\vol(B_t^{\kappa,m})}\leq \E(M).
\end{aligned}
\end{equation*}
\item The fundamental tone $\lambda^*(M)$ satisfies
\begin{equation*}
 \lambda^*(M)=\frac{-\left(m-1\right)^2\kappa}{4}
\end{equation*}
\item The Cheeger constant satisfies
\begin{equation*}
\mathcal{I}(M)= (m-1)\sqrt{-\kappa}
\end{equation*}
\item If $M$ has only one end, $(\E(M)=1)$, them $M$ is isometric to $\Ham$.
\end{enumerate}
\end{corollary}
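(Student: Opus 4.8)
The plan is to derive (1)--(4) by feeding the extrinsic volume and spectral estimates of Theorem~\ref{Hyperbolic1} into two inputs that become available once $\varphi$ is minimal: the monotonicity of the extrinsic volume density, and the sharp lower bounds for the fundamental tone and the Cheeger constant of minimal submanifolds of $\Han$ proved in \cite{GPCheeger}. The preliminary reduction is that $b(M)<\infty$ forces $a(M)=0$: writing $\tanh(\sqrt{-\kappa}\rho)=\cosh^{-2}(\sqrt{-\kappa}\rho)\cdot\cosh(\sqrt{-\kappa}\rho)\sinh(\sqrt{-\kappa}\rho)$ and using that $\cosh^{-2}(\sqrt{-\kappa}\rho)\to 0$ as $\rho\to\infty$ while $\tfrac{1}{\sqrt{-\kappa}}\cosh(\sqrt{-\kappa}\rho)\sinh(\sqrt{-\kappa}\rho)\Vert\alpha\Vert$ stays bounded, we get $\tfrac{1}{\sqrt{-\kappa}}\tanh(\sqrt{-\kappa}\rho)\Vert\alpha\Vert\to 0$, so $a(M)=0<1$. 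Hence Theorem~\ref{tamed-theorem} and Theorem~\ref{Hyperbolic1} apply in full: $\varphi$ is proper, $M$ has finite topological type with finitely many ends $\E(M)$, the end decomposition $M\setminus D_R(o)=\bigsqcup_iV_i$ with $V_i\cong\partial V_i\times[0,\infty)$ holds, and one has $\liminf_{t\to\infty}\vol(D_t)/\vol(B_t^{\kappa,m})\leq\E(M)$ together with $\lambda^*(M)\leq -(m-1)^2\kappa/4$.

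For (1) the key remark is that, $\varphi$ being minimal and the pole $o$ lying in $\varphi(M)$ (which is closed, since $\varphi$ is proper), the extrinsic volume density $\Theta(t):=\vol(D_t)/\vol(B_t^{\kappa,m})$ is a non-decreasing function of $t$ --- the hyperbolic analogue of the classical monotonicity formula for minimal submanifolds, obtained by integrating over $D_t$ the divergence (in the induced metric) of the gradient of a suitable radial comparison function adapted to the model $\Ham$ and using that the mean curvature vector of $\varphi$ vanishes. Monotonicity turns the $\liminf$ above into a genuine limit equal to the supremum, so $\sup_{t\in\erre_+}\Theta(t)=\lim_{t\to\infty}\Theta(t)\leq\E(M)$, which is (1).

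For (2) and (3) I would invoke \cite{GPCheeger}: for a minimal submanifold of $\Han$ it yields $\mathcal{I}(M)\geq(m-1)\sqrt{-\kappa}$, and then Cheeger's inequality $\lambda^*(M)\geq\tfrac14\mathcal{I}(M)^2$ gives $\lambda^*(M)\geq -(m-1)^2\kappa/4$. Comparing this with the opposite inequality $\lambda^*(M)\leq -(m-1)^2\kappa/4$ from Theorem~\ref{Hyperbolic1} forces $\lambda^*(M)=-(m-1)^2\kappa/4$, which is (2). Plugging this equality back into Cheeger's inequality gives $\tfrac14\mathcal{I}(M)^2\leq -(m-1)^2\kappa/4$, i.e.\ $\mathcal{I}(M)\leq(m-1)\sqrt{-\kappa}$, and combined with the lower bound this is the equality (3).

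Finally, for (4) suppose $\E(M)=1$. By (1), $\sup_{t\in\erre_+}\Theta(t)\leq 1$; on the other hand $\Theta$ is non-decreasing and $\lim_{t\to 0^+}\Theta(t)$ equals the multiplicity of $\varphi$ at the point over $o$, which is $\geq 1$. Hence $\Theta\equiv 1$ on $\erre_+$, and the rigidity (equality) case of the monotonicity formula forces $\varphi(M)$ to be totally geodesic; being a complete, $m$-dimensional totally geodesic submanifold of $\Han$, it is a copy of $\Ham$, so $M$ is isometric to $\Ham$. (This does not follow from the gap Theorem~\ref{gap1}, since a one-ended minimal $M$ need not be simply connected nor satisfy $K_M\leq\kappa$.) I expect the genuine work to be concentrated in this monotonicity-plus-rigidity step --- proving that $\Theta$ is non-decreasing for minimal submanifolds of $\Han$ through the pole, identifying its limit at $0$ with the multiplicity, and showing that $\Theta\equiv 1$ implies $\alpha\equiv 0$; the remaining assembly from Theorem~\ref{Hyperbolic1}, \cite{GPCheeger} and Cheeger's inequality is routine.
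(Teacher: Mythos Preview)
Your proposal is correct and follows essentially the same route the paper intends: the paper presents this corollary as a direct combination of Theorem~\ref{Hyperbolic1} (giving the upper bounds $\lambda^*(M)\leq -(m-1)^2\kappa/4$ and $\liminf_t \vol(D_t)/\vol(B_t^{\kappa,m})\leq\E(M)$) with the lower bounds for $\mathcal{I}(M)$ and $\lambda^*(M)$ from \cite{GPCheeger}, together with the monotonicity/rigidity of the minimal extrinsic volume quotient from \cite{GPGap} (their Theorem~1.1) to upgrade the $\liminf$ to a $\sup$ and to obtain the one-end rigidity. Your write-up simply makes these steps explicit.
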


\subsection{Outline of the paper} The structure of the paper is as follows:

In the preliminaries, Section \S \ref{prelim}, subsection \S \ref{extdist}, we recall the preliminary concepts and properties of extrinsic distance function. In subsection \S \ref{subsec2.2} it is presented and studied the notion of {\em tamed submanifold} and we finish the preliminaries establishing lower and upper bounds for the sectional curvatures of the boundary of an end in a tamed submanifold, in subsection \S \ref{subsec2.3}.
We shall prove theorem \ref{Euclidean1} in  Section \S \ref{section-proof-euc}, obtaining as a result of that proof Corollaries \ref{dim3-theo} and \ref{betti} which deal about several topological properties of the ends of the submanifold, such as vanishing first Betti number. We prove Theorem \ref{Hyperbolic1}  in \S \ref{section-proof-hyp}, obtaining Corollaries \ref{dim3-theoHyp} and \ref{bettiHyp} in the same way as in Section \S \ref{section-proof-euc}. Finally, in \S \ref{section-proof-gap} the gap type result,  Theorem \ref{gap1}, is proved.

%%%%%%%%%%%%%%%%%%%%%%%%%%%%%
\section{Preliminaries}\label{prelim}
%%%%%%%%%%%%%%%%%%%%%%%%%%%%%%%%%%%%%

%%%%%%%%%%%%%%%%%%%%
\subsection{Analysis of the extrinsic distance function defined on a submanifold}\label{extdist}
%%%%%%%%%%%%%%%%%%%%%%%%
We start  presenting some standard definitions and results that we can find in previous works (see e.g. \cite{GPGap}, \cite{Pa}). We assume throughout the paper that $\varphi\colon M \hookrightarrow \kan$ is an isometric immersion of a complete non-compact Riemannian $m$-manifold $M$ into a $n$-dimensional real space form $\kan$ of constant sectional curvature $\kappa\leq 0$. For every $x \in \kan\setminus \{o\}$ we
define $r(x) = r_o(x) = \dist_{\kan}(o, x)$, and this
distance is realized by the length of a unique
geodesic from $o$ to $x$, which is the {\it
radial geodesic from $o$}. We also denote by $r\vert_M$ or by $r$
the composition $r\circ \varphi: M\to \erre_{+} \cup
\{0\}$. This composition is called the
{\em{extrinsic distance function}} from $o$ in
$M$. The gradients of $r$ in $\kan$ and of $r\vert_M$ in  $M$ are
denoted by $\nabla^{\kan} r$ and $\nabla^M r$,
respectively. Then we have
the following basic relation, by virtue of the identification, given any point $x\in M$, between the tangent vectors $X \in T_xM$ and $\varphi_{*_{x}}(X) \in T_{\varphi(x)}\kan$
\begin{equation}\label{radiality}
\nabla^{\kan} r = \nabla^M r +\nabla^\bot r ,
\end{equation}
where $\nabla^\bot r(\varphi(x)):=(\nabla^M r)^\bot(\varphi(x))$ is perpendicular to $T_{x}M$ for all $x\in M$.

\begin{definition}\label{ExtBall}
Given $\varphi: M^m \longrightarrow \kan$ an isometric immersion of a complete and connected Riemannian $m$-manifold $M$ into a real space form $\kan$ of constant sectional curvature $\kappa\leq 0$,  we define the {\em{extrinsic metric balls}} of radius $t >0$ and center $o \in \kan$ as  the subsets of $P$:
\begin{equation*}
\begin{aligned}
D_t(o)&=\{x\in M : r(\varphi(x))< t\}\\&=\{x\in M : \varphi(x) \in B^{\kan}_{t}(o)\} = \varphi^{-1}(B^{\kan}_{t}(o))
\end{aligned}
\end{equation*}
where $B^{\kan}_{t}(o)$ denotes the open geodesic ball
of radius $t$ centered at $o\in\kan$. 
\end{definition}

\begin{remark}\label{theRemk0}
Despite the set $\varphi^{-1}(o)$ in the above definition can be the empty set, in this paper we always chose an $o\in \kan$ such that $\varphi^{-1}(o)=\{q\}$.
When the immersion $\varphi$ is proper, the extrinsic domains $D_t(o)$
are precompact sets, with smooth boundary $\partial D_t(o)$.  The assumption on the smoothness of
$\partial D_{t}(o)$ makes no restriction. Indeed, 
the distance function $r$ is smooth in $\kan\setminus \{o\}$. Hence
the composition $r\vert_M$ is smooth in $M\setminus \{q\}$ and consequently the
radii $t$ that produce smooth boundaries
$\partial D_{t}(o)$ are dense in $\mathbb{R}_+$ by
Sard's Theorem and the Regular Level Set Theorem.
\end{remark}

\begin{proposition}
Let $\varphi\colon M\to N$ be an isometric immersion of a  Riemannian manifold $M$ into a Riemannian manifold $N$ and let $f\colon N\to \erre$ be a smooth function, then
\begin{equation}\label{hessian-com}
\Hess^M(f\circ \varphi)(u,v)=\Hess^Nf(\varphi_*(u),\varphi_*(v))+\langle\nabla^Nf,\alpha(u,v)\rangle,
\end{equation}where $\alpha$ is the second fundamental form of the immersion.
\end{proposition}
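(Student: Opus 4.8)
The plan is to reduce everything to the Gauss formula relating the Levi--Civita connections $\nabla^M$ of $M$ and $\nabla^N$ of $N$, namely $\nabla^N_{\varphi_*X}\varphi_*Y=\varphi_*(\nabla^M_XY)+\alpha(X,Y)$ for vector fields $X,Y$ on $M$, combined with the chain rule and metric compatibility. Since both sides of \eqref{hessian-com} are tensorial in $u$ and $v$, I would fix $p\in M$, extend $u,v$ to smooth vector fields on a neighbourhood of $p$ in $M$, and compute; the value at $p$ depends only on the pointwise values of the arguments, so this is legitimate.

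First I would write the intrinsic Hessian as $\Hess^M(f\circ\varphi)(u,v)=u\big(v(f\circ\varphi)\big)-\big(\nabla^M_uv\big)(f\circ\varphi)$. Applying the chain rule, $v(f\circ\varphi)=\langle\nabla^Nf,\varphi_*v\rangle$ and $\big(\nabla^M_uv\big)(f\circ\varphi)=\langle\nabla^Nf,\varphi_*(\nabla^M_uv)\rangle$, where the inner product and the gradient are those of $N$, evaluated along $\varphi$. Next I would differentiate the first term using compatibility of $\nabla^N$ with the metric of $N$: $u\langle\nabla^Nf,\varphi_*v\rangle=\langle\nabla^N_{\varphi_*u}\nabla^Nf,\varphi_*v\rangle+\langle\nabla^Nf,\nabla^N_{\varphi_*u}\varphi_*v\rangle$. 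The first summand is exactly $\Hess^Nf(\varphi_*u,\varphi_*v)$, by the definition of the Hessian of $f$ on $N$. For the second summand I would invoke the Gauss formula to write $\nabla^N_{\varphi_*u}\varphi_*v=\varphi_*(\nabla^M_uv)+\alpha(u,v)$. Substituting this in and cancelling $\langle\nabla^Nf,\varphi_*(\nabla^M_uv)\rangle$ against the term coming from $\big(\nabla^M_uv\big)(f\circ\varphi)$ yields \eqref{hessian-com}.

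The only point that requires a little care --- the ``main obstacle'', although it is minor --- is the bookkeeping of connections: $\varphi_*v$ is a vector field along the map $\varphi$ rather than on $N$, so the derivative $\nabla^N_{\varphi_*u}(\cdot)$ must be read as the pullback connection $\varphi^*\nabla^N$, and one must check that the chain rule, the metric compatibility, and the Gauss formula are all compatible with this reading. Once this is set up, the identity is a one-line substitution. Alternatively one may sidestep the pullback connection by working locally, using that $\varphi$ is an embedding near $p$, extending $f$ and the fields $\varphi_*u,\varphi_*v$ to a neighbourhood of $\varphi(p)$ in $N$, and carrying out the same computation there with ordinary vector fields on $N$.
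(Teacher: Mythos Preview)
Your proof is correct and is the standard derivation of this identity via the Gauss formula. Note, however, that the paper does not actually supply a proof of this proposition: it is stated in the preliminaries as a known fact (alongside the formula for the Hessian of the distance function in a space form) and then used without further justification. So there is nothing to compare your approach against; you have filled in the routine computation that the authors elected to omit.
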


On the other hand, the Hessian of the distance function $r\colon\kan\setminus\{0\}\to \erre$ at a point $p \in \kan$ is given by 
\begin{proposition}\label{hess-dist}
\begin{equation}
\Hess_p^{\kan} r(u,v)=\frac{C_\kappa}{S_\kappa}(r(p))\left(\langle u, v\rangle-\langle \nabla^{\kan} r, u\rangle\langle \nabla^{\kan} r, v\rangle\right).
\end{equation}
where the function $S_{\kappa}$ is given by

\begin{equation}\label{eqSk}
 S_{\kappa} (t)=\left \{
\begin{array}{ccl}
\displaystyle \frac {1}{\sqrt{-\kappa}}\sinh(\sqrt{ -\kappa}\,t),&if& \kappa<0 \\
t, &if& \kappa=0
\end{array} \right.
\end{equation}
and $C_{\kappa}(t)= S_{\kappa}'(t)$.
\end{proposition}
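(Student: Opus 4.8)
The plan is to exploit the rotational symmetry of the space form $\kan$ about the pole $o$. In geodesic polar coordinates centered at $o$ the metric has the warped-product form $g = dr^{2} + S_{\kappa}(r)^{2}\,\hat g$, where $\hat g$ is the standard round metric on the unit sphere $\mathbb{S}^{n-1}$ and $S_{\kappa}$ is the function in \eqref{eqSk}; in particular $\nabla^{\kan} r = \partial_{r}$ and $\lvert\nabla^{\kan}r\rvert\equiv 1$ on $\kan\setminus\{o\}$. The first step is to reduce the evaluation of $\Hess^{\kan}_{p} r$ to the orthogonal decomposition $u = \langle u,\nabla^{\kan}r\rangle\,\partial_{r} + u^{\perp}$, $v = \langle v,\nabla^{\kan}r\rangle\,\partial_{r} + v^{\perp}$, with $u^{\perp},v^{\perp}$ tangent to the geodesic sphere through $p$, and to expand $\Hess^{\kan}r(u,v)$ by bilinearity into a radial, a mixed and a tangential block.

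Then I would compute the three blocks. (i) The radial block vanishes: since $\lvert\nabla^{\kan}r\rvert\equiv1$, differentiating this identity along any vector field gives $\Hess^{\kan}r(\partial_{r},\cdot)=\langle\nabla^{\kan}_{\partial_{r}}\partial_{r},\cdot\rangle=0$, which is just the statement that the radial geodesics are the integral curves of $\partial_{r}$. (ii) The mixed block vanishes: for a coordinate field $X$ on $\mathbb{S}^{n-1}$, extended to be $r$-independent, $[\partial_{r},X]=0$, so $\nabla^{\kan}_{\partial_{r}}X=\nabla^{\kan}_{X}\partial_{r}$, which in a warped product is tangent to the fibre sphere, hence $\Hess^{\kan}r(\partial_{r},X) = -\,dr\bigl(\nabla^{\kan}_{\partial_{r}}X\bigr)=0$. (iii) The tangential block is the shape operator of the geodesic sphere: using the warped-product connection identity $\nabla^{\kan}_{X}\partial_{r} = \tfrac{S_{\kappa}'(r)}{S_{\kappa}(r)}\,X = \tfrac{C_{\kappa}(r)}{S_{\kappa}(r)}\,X$ for $X$ tangent to the sphere, one obtains $\Hess^{\kan}r(X,Y) = \langle\nabla^{\kan}_{X}\partial_{r},Y\rangle = \tfrac{C_{\kappa}}{S_{\kappa}}(r)\,\langle X,Y\rangle$.

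Reassembling the three computations via the decomposition above yields
\[
\Hess^{\kan}_{p} r(u,v) = \frac{C_{\kappa}}{S_{\kappa}}(r(p))\,\langle u^{\perp},v^{\perp}\rangle = \frac{C_{\kappa}}{S_{\kappa}}(r(p))\Bigl(\langle u,v\rangle - \langle \nabla^{\kan}r,u\rangle\,\langle\nabla^{\kan}r,v\rangle\Bigr),
\]
which is the asserted formula. The only input here that is not pure bookkeeping is the warped-product connection identity used in step (iii); a self-contained alternative is to compute $\langle\nabla^{\kan}_{X}\partial_{r},X\rangle$ directly from the Koszul formula applied to $g = dr^{2}+S_{\kappa}(r)^{2}\hat g$, or, equivalently, to recall that geodesic spheres in a space form are totally umbilical with principal curvature $\tfrac{C_{\kappa}}{S_{\kappa}}(r)$ — which one sees by solving the Jacobi equation along the radial geodesics, where $S_{\kappa}$ arises precisely as the normalized Jacobi field vanishing at $o$. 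I expect this umbilicity computation to be the main (though entirely routine) step; the orthogonal decomposition and the vanishing of the radial and mixed blocks are immediate.
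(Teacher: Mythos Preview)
Your argument is correct and is one of the standard proofs of this classical formula. Note, however, that the paper does not supply a proof of this proposition at all: it is stated as a well-known fact about the Hessian of the distance function in a real space form (see, e.g., Sakai or do Carmo for the comparison-theory version). So there is nothing to compare against; your warped-product computation is exactly the kind of routine verification one would give if a proof were required, and each of the three blocks is handled correctly.
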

%Where
%\begin{equation}\label{eqSk}
% S_{\kappa} (t)=\left \{
%\begin{array}{ccl}
%\displaystyle \frac {1}{\sqrt{-\kappa}}\sinh(\sqrt{ -\kappa}\,t),&if& \kappa<0 \\
%t, &if& \kappa=0
%\end{array} \right.
%\end{equation}
%and $C_{\kappa}(t)= S_{\kappa}'(t)$
Let us recall that, if $\varphi\colon M^m
\hookrightarrow \kan$ is a tamed isometric immersion of a complete Riemannian
$m$-manifold $M$ into the  $n$-dimensional real space form $\kan$ with constant sectional curvature  $\kappa \leq 0$, then there exist $R_0\in M$ such that the extrinsic distance function has no critical points in $M\setminus D_{R_0}(x_0)$, where  $D_R(x_0)$ denotes the extrinsic ball of radius $R$ centered at $x_0\in M$. We have the following technical result in this context:

\begin{lemma}\label{kasue}{{\cite[Proof of lemma 4]{KS}}} Let $\varphi\colon M^m
\hookrightarrow \kan$ be an isometric immersion of a complete Riemannian
$m$-manifold $M$ into a $n$-dimensional real space form $\kan$ with constant sectional curvature  $\kappa \leq 0$. Let us suppose that there exists $R_0>0$ such that the extrinsic distance function has no critical points in $M\setminus D_{R_0}(x_0)$.
Suppose that there exist a function $G:\erre\to\erre$ such that $\Vert \alpha \Vert (x)\leq G(r(x))$. Then for any $x\in M\setminus D_{R_0}(x_0)$, 
\begin{equation}
 \vert \nabla^\perp r\vert \leq \delta(r(x))+ \frac{1}{S_{\kappa}(r(x))}\int_{R_0}^{r(x)}S_{\kappa}(s)G(s)ds.
\end{equation}
Here $\delta(t)$ is a decreasing function such that $\delta\to 0$ when $t\to\infty$.
\end{lemma}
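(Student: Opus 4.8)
The plan is to set up a Riccati-type comparison for the radial component of the gradient of $r|_M$ along the integral curves of $\nabla^M r / |\nabla^M r|^2$, following the classical idea of Kasue--Sugahara. First, I would fix $x \in M \setminus D_{R_0}(x_0)$ and let $\gamma$ be the integral curve of the (normalized) gradient flow of $r|_M$ through $x$; since there are no critical points of $r|_M$ outside $D_{R_0}$, such a curve exists and reaches the boundary $\partial D_{R_0}$ at some point. Along $\gamma$ I want to control $|\nabla^\perp r|^2 = 1 - |\nabla^M r|^2$, so I would compute its derivative with respect to the extrinsic distance parameter $s = r(\gamma(s))$. Differentiating and using the composition formula for the Hessian, equation \eqref{hessian-com}, together with the explicit formula for $\Hess^{\kan} r$ in Proposition \ref{hess-dist}, one expresses $\tfrac{d}{ds}|\nabla^M r|^2$ (equivalently $\tfrac{d}{ds}|\nabla^\perp r|^2$) in terms of $\tfrac{C_\kappa}{S_\kappa}(s)$, the tangential/normal splitting of $\nabla^{\kan} r$, and the term $\langle \nabla^{\kan} r, \alpha(\cdot,\cdot)\rangle$, which is bounded in absolute value by $\|\alpha\| \le G(r)$.

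The key step is to massage this ODE into the form
\begin{equation*}
\frac{d}{ds}\Big( S_\kappa(s)\,|\nabla^\perp r|(\gamma(s)) \Big) \;\le\; C_\kappa(s)\,|\nabla^\perp r| \;+\; S_\kappa(s)\,G(s)
\end{equation*}
or something close enough that, after an integrating-factor manipulation, the $C_\kappa |\nabla^\perp r|$ contribution can be absorbed and one is left with a Gr\"onwall-type inequality whose right-hand side is $S_\kappa(s) G(s)$. Integrating from $R_0$ to $r(x)$ and dividing by $S_\kappa(r(x))$ then produces the stated integral term $\tfrac{1}{S_\kappa(r(x))}\int_{R_0}^{r(x)} S_\kappa(s) G(s)\,ds$. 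The boundary contribution at $s = R_0$ — namely the value of $|\nabla^\perp r|$ on $\partial D_{R_0}$, transported by the factor $S_\kappa(R_0)/S_\kappa(r(x))$ — is what gets packaged into the function $\delta(r(x))$: since $\partial D_{R_0}$ is compact, $|\nabla^\perp r|$ is bounded there, and the prefactor $S_\kappa(R_0)/S_\kappa(r(x))$ decreases to $0$ as $r(x) \to \infty$ (this is clear for $\kappa = 0$, where it is $R_0/r(x)$, and for $\kappa < 0$, where $S_\kappa$ grows exponentially). One then defines $\delta(t)$ to be this decreasing-to-zero envelope, possibly after also folding in a harmless correction coming from any lower-order terms dropped in the ODE manipulation.

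The main obstacle I anticipate is the bookkeeping in the Riccati/ODE step: one has to be careful that the sign of the $\tfrac{C_\kappa}{S_\kappa}$ term works in the favorable direction (it should, since $\Hess^{\kan} r$ is positive semidefinite and acts to ``spread out'' the gradient), and that the cross term $\langle \nabla^{\kan} r, \alpha \rangle$ is estimated crudely by $G$ without losing the right power of $S_\kappa$ in the integrand. A secondary subtlety is that the gradient flow parameter and the extrinsic distance parameter differ by a factor of $|\nabla^M r|$, which is itself what we are estimating; one handles this by noting that outside $D_{R_0}$ we have $|\nabla^M r|$ bounded away from $0$, or more cleanly by reparametrizing directly by $s = r$ and verifying that the curve stays in the region with no critical points. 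Since essentially this computation is carried out in \cite{KS}, I would present the argument compactly, citing the proof of \cite[Lemma 4]{KS} for the routine parts and emphasizing only the adaptation to the extrinsic distance function and the constant-curvature warping function $S_\kappa$.
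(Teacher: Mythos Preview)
The paper does not actually supply a proof of this lemma: it is stated with the attribution \cite[Proof of lemma 4]{KS} and used as a black box, with no proof environment following the statement. So there is nothing to compare your argument against within the paper itself.

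That said, your sketch is a faithful outline of the Kasue--Sugahara argument and is essentially correct. You correctly identify the gradient-flow curve of $r|_M$ reaching $\partial D_{R_0}$, the use of the Hessian comparison (Propositions in the paper giving $\Hess^{\kan} r = \tfrac{C_\kappa}{S_\kappa}(g - dr\otimes dr)$ and the composition rule with $\alpha$), and the integrating-factor trick that produces $S_\kappa$ inside the integral. Your identification of the boundary term at $s=R_0$ as the source of $\delta(t)$, with the decay coming from the compactness of $\partial D_{R_0}$ and the growth of $S_\kappa$, is exactly right. The caveats you flag (the reparametrization by $s=r$ versus arclength, and the sign of the $C_\kappa/S_\kappa$ contribution) are the genuine technical points, and both resolve in the favorable direction as you suspect. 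Since the paper itself defers entirely to \cite{KS}, your write-up would in fact be \emph{more} detailed than what appears there; if you are matching the paper's level of presentation, simply citing \cite[Lemma 4]{KS} is what the authors do.
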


%%%%%%%%%%%%%%%%%%%%
\subsection{{\em Tamed} submanifolds. Some examples}\label{subsec2.2}
%%%%%%%%%%%%%%%%%%%%%%%%
The extrinsic decay conditions in the results stated above, can be described more carefully in the following way: 

\begin{definition}
Let $\varphi\colon\! M^{m}\!
\hookrightarrow \kan$ be an isometric immersion of a complete Riemannian
$m$-manifold $M$ into the $n$-dimensional Euclidean space $\erre^n$ or the Hyperbolic space $\Han$. Fix a point $x_0 \in M$ and let $\rho_{M} (x) = {\rm dist}_{M}(x_0, x)$ be the  distance function on $M$ to $x_0$. Let
$\{C_{i}\}_{i=1}^{\infty}$ be a nested exhaustion sequence of $M$ by
compacts sets with $x_0 \in C_0$. Let $\{a_{i}\}_{i=1}^{\infty}\subset [0,\infty]$ and $\{b_{i}\}_{i=1}^{\infty}\subset [0,\infty]$ two sequences defined as

\begin{equation}
\begin{aligned}
a_i =& \sup \Biggl \{ \displaystyle
\left(\frac{S_{\kappa}}{C_{\kappa}}\right)(\rho_{M} (x))\cdot
\Vert \alpha (x)\Vert, \, x \in M \backslash C_i \Biggr \}\,\,\,\forall i=1,...,\infty\\
b_i = & \sup \Biggl \{ \displaystyle
\left(C_{\kappa} \cdot S_{\kappa}\right)(\rho_{M} (x))\cdot
\Vert \alpha (x)\Vert, \, x \in M \backslash C_i \Biggr \}\,\,\,\forall i=1,...,\infty
\end{aligned}
\end{equation}
where   $\Vert \alpha(x)\Vert $ is the  norm of the second
fundamental form at $\varphi(x)$.

With those two sequences we define
\begin{equation}
\begin{aligned}
a(M):=&\lim_{i\to\infty}a_i\\
b(M):=&\lim_{i\to\infty}b_i
\end{aligned}
\end{equation}
The numbers $a(M)$ and $b(M)$
does not depend on the exhaustion sequence $\{C_{i}\}$ nor on the base point $x_{0}$. 
\end{definition}

With the extrinsic invariants $a(M)$ and $b(M)$ in hand, we define the following {\em extrinsic curvature decays}

\begin{definition}\label{tamed}
An immersion $\varphi\colon \! M^m \hookrightarrow \kan$ of a complete Riemannian
$m$-manifold $M$ into a $n$-dimensional space form  $\kan$ with constant sectional curvature $\kappa\leq 0$ has {\em tamed} second fundamental form, (or simply, it is {\em tamed}) if and only if $a(M)<1$. When $a(M)=0$, then $M$ is {\em extrinsically asymptotically flat}. In the case $\varphi\colon M \hookrightarrow \Han$ we say that $M$ is {\em strongly tamed} when $b(M) < \infty$.
\end{definition}

\begin{remark}
Note that for immersions $\varphi\colon M^m \hookrightarrow \Han$, $b(M) < \infty$ implies $a(M) <1$, (in fact, $a(M)=0$, see Remark \ref{f} below) i.e., to be strongly tamed implies to be tamed.
\end{remark}

 We are going to give some examples and remarks which could help to understand better the notion of asymptotically flateness.
 
 \begin{remark}
For   isometric immersions $\varphi\colon \!M^m\!
\hookrightarrow \erre^n$ of  complete Riemannian
$m$-manifolds $M$ into the $n$-dimensional Euclidean space we have that   $a_i(M)=b_i(M)$ for all $i$. Hence, in the Euclidean case we only consider an invariant, $a(M)$.
\end{remark}
\begin{remark}
  Observe also that, for   isometric immersions $\varphi\colon M^m
\hookrightarrow \erre^n$ of  complete Riemannian
$m$-manifolds $M$ into the $n$-dimensional Euclidean space, $a(M)=0$ implies $A(M)=0$ because using the Gauss formula we have that the second fundamental form of $M$ satisfies the inequalities 

\begin{equation*}
-2\Vert\alpha(x)\Vert^2\leq \vert K_x\vert \leq \Vert \alpha(x)\Vert^2.
\end{equation*}

\noindent However, the opposite implication it is not true in general, as we shall show below.
\end{remark}
\begin{remark}
For isometric immersions $\varphi\colon M^m
\hookrightarrow \Han$ of  complete Riemannian
$m$-manifolds $M$ into the $n$-dimensional Hyperbolic space $\Han$,  $a(M)=0$ does not imply $A(M)=0$. For example, let   $\Ham \subseteq \Han$ be a  totally geodesic immersion. In this case, $a_i(\Ham)=b_i(\Ham), \,\,\forall i$, so $a(\Ham)=b(\Ham)=0$. However, $A(\Ham)=\infty$, because $\vert K_x \vert= -\kappa >0$.
\end{remark}

\begin{remark}\label{f}
Also in the case of an isometric immersion of a complete Riemannian
$m$-manifold $M$ into the $n$-dimensional Hyperbolic space $\varphi\colon M^m
\hookrightarrow \Han$, we have that  $b(M) < \infty$ implies $a(M)=0$. To see it, note that $b(M)= \lim_{i\to\infty}b_i <\infty$ implies in this case that $\frac {1}{\sqrt{-\kappa}}\sinh(\sqrt{ -\kappa}\,\rho_M(x))\cosh(\sqrt{ -\kappa}\,\rho_M(x))\Vert \alpha (x)\Vert$ is finite when $\rho_M(x)$ goes to infinity, so $\Vert \alpha (x)\Vert$ goes very fast to zero when $\rho_M(x)$ goes to infinity and this implies, as \[\lim_{\rho_M(x) \to \infty}  \frac{\left(\displaystyle\frac{S_{\kappa}}{C_{\kappa}}\right)(\rho_{M} (x))}{\left(C_{\kappa} \cdot S_{\kappa}\right)(\rho_{M} (x))}=0,\] that $a(M)=0$.
\end{remark}

\begin{example} We have seen that, when we consider an isometric immersion $\varphi: M
\hookrightarrow \erre^n$ then $a(M)=0$ implies that $A(M)=0$.
However, the opposite implication it is not true in general. If we consider the cylinder $C=\{(x,y,z) \in \erre^3/ x^2+y^2=1\} \subseteq \erre^3$ isometrically immersed by the inclusion map, we know that its sectional curvature is $K^C_p(\sigma)=0$ for all points $p \in C$ and all tangent planes $\sigma \subseteq T_pC$. Hence, $A(C)=0$. On the other hand, the norm of its second fundamental form $\Vert \alpha^C \Vert=\text{constant}$, so $a(C)=\infty$.

\end{example}

\begin{example}
Extrinsic asymptotically flateness $a(M)=0$ implies intrinsic asymptotic flateness $A(M)=0$ for submanifolds of $\erre^n$, and, in any ambient space form $\kan$, if the submanifold is extrinsically asymptotically flat, then it is tamed. Observe too that in the hyperbolic space, submanifolds with $a(M)<1$ or $b(M)<\infty$ are not in general asymptotically flat, (although in this case, we have seen that $b(M) < \infty$ implies $a(M)=0$,\,  i. e., the manifold is extrinsically asymptotically flat). Consider for instance the totally geodesic immersion $\varphi:\Ham \hookrightarrow \Han$, which  has $a(\Ham)=b(\Ham)=0$ but with  $A(\Ham)=\infty$. 
\end{example}

\begin{example}
We are going to present, following the construction given in \cite{DoCDa}, a rotation hypersurface $M^n$ of $\Ha^{n+1}(-1)$, $n \geq 2$, with $b(M) < \infty$. For that, let us consider first the Hyperbolic space $\Ha^{n+1}(-1)$ as a hypersurface of the Lorentzian space $L^{n+2}$, with Lorentzian metric $g_{-1}$.

Let us choose $P^2$ a $2$-dimensional plane in $L^{n+1}$, passing through the origin and such that the restriction $g_{-1}\vert_{P^2}$ is Lorentzian. Let us denote as $O(P^2)$ the set of all orthogonal transformations of $L^{n+2}$ with positive determinant and such that leaves $P^2$ fixed. Then, let us consider now a subspace $P^3 \subseteq L^{n+2}$ such that $P^2 \subseteq P^3$ and $P^3 \cap \Ha^{n+1}(-1) \neq \emptyset$ and finally, let $C$ be a regular curve in $P^3 \cap \Ha^{n+1}(-1) $ that does not meet $P^2$. With all this elements in hand, we define the {\em rotation hypersurface $M^n \subseteq \Ha^{n+1}(-1)$} as the orbit of $C$ under the action of $O(P^2)$.

To give an explicit parametrization of this submanifold we start describing the set $O(P^2)$. We choose, always following \cite{DoCDa}, an orthonormal basis $\{e_1,...,e_{n+1},e_{n+2}\}$ of $L^{n+2}$ such that $P^2$ is the plane generated by $\{e_{n+1}, e_{n+2}\}$, $g_{-1}(e_{n+2},e_{n+2})=-1$ and the matrix of an element of $O(P^2)$ is written as a block diagonal matrix, having square matrices $A_i$,  ($i=1,...,n/2 +1$ if $n$ is even, $i=1,..., (n-1)/2 +1$ if $n$ is odd), as main diagonal blocks. Each of these square matrices $A_i$ corresponds with a rotation of angle $\theta_i$, with $\theta_{n/2 +1} =0$.
%\begin{equation}
%\begin{aligned}
%\left( \begin{array}{llcl}
%A_1 & \ldots & \ldots & \ldots & 0 \\
%0 & A_2 & \ldots & \ddots &0
%\vdots & 
%\end{aligned}
%\end{equation}

Let $P^3$ be the space generated by $\{e_1,e_{n+1}, e_{n+2}\}$. We have that $P^2 \subseteq P^3$ and we parametrize the curve $C$ in $P^3 \cap \Ha^{n+1}$ by $(x_1(s), 0, ..., 0,x_{n+1}(s), x_{n+2}(s))$. Now, given a fixed $s=s_0$, let us consider the point $(x_1(s_0),0, ..., 0, x_{n+1}(s_0), x_{n+2}(s_0)) \in C$, and then the orbit under $O(P^2)$ of this point , denoted as $U(s_0)$. $U(s_0)$ is a sphere obtained as the intersection of an affine plane parallel to $\langle\{e_1,...,e_n\}\rangle$  with $\Ha^{n+1}$ and is the parallel of $M^n$ passing through the point $(x_1(s_0),0, ..., 0, x_{n+1}(s_0), x_{n+2}(s_0))$. Hence, a parametrization of $M$ can be obtained parametrizying this orbit $U(s_0)$ and letting $s_0$ vary.

Let us consider $\varphi(t_1,...,t_{n-1})=(\varphi_1(t_1,...,t_{n-1}),...,\varphi_n(t_1,...,t_{n-1}))$ an orthogonal parametrization of the unit sphere of $\langle \{e_1,...,e_n\} \rangle$. Then, 
\begin{equation}
f(t_1,...,t_{n-1},s):=(x_1(s)\varphi_1,...,x_1(s)\varphi_n, x_{n+1}(s), x_{n+2}(s))
\end{equation}
\noindent is the parametrization of the rotation hypersurface $M^n$ generated by the curve $C$ around $P^2$.
In Proposition 3.2 in \cite{DoCDa} it is proved that the principal curvatures of $M$ along the principal directions given by the coordinate curves $t_i$, ($i=1,...,n-1$) are
\begin{equation}
\lambda_i(s)=-\frac{\sqrt{1+x_1^2-\dot{x}_1^2}}{x_1}=\lambda(s)
\end{equation}
\noindent and  the principal curvature along the coordinate curve $s$
is

\begin{equation}
\mu(s)=\frac{\ddot{x}_1-x_1}{\sqrt{1+x_1^2-\dot{x}_1^2}}
\end{equation}

When $x_1(s)=(a\cosh(2s)-\frac{1}{2})^{\frac{1}{2}}$, with $a \in \erre$, $a >\frac{1}{2}$, it is straightforward to check that $\lambda=-\mu$, so when the dimension of the submanifold $M^n$ is $n=2$, then $M$ is minimal, and, computing the norm of the second fundamental form of $M^2$, denoted as $\alpha_{M^2}$, we have, see \cite{DoCDa}, that 
\begin{equation}
\Vert \alpha_{M^2}(s) \Vert^2=\lambda^2(s)+\mu^2(s)=2\lambda^2=\frac{2(a^2-\frac{1}{4})}{(a \cosh(2s)-\frac{1}{2})^2}
\end{equation}
so 
\[\lambda^2(s)=\frac{(a^2-\frac{1}{4})}{(a \cosh(2s)-\frac{1}{2})^2}\cdot \]

Hence, if we consider now $M^n$ with $n >2$, we obtain

\begin{equation}
\Vert \alpha_{M^n}(s) \Vert^2=n\lambda^2=n\frac{2(a^2-\frac{1}{4})}{(a \cosh(2s)-\frac{1}{2})^2}\cdot
\end{equation}

We can compute $b(M^n)$ for these hypersurfaces, ($n \geq 2$), applying L'Hospital's rule:
\begin{equation}
\begin{aligned}
b(M^n)&=\lim_{i\to\infty}b_i=\lim_{s\to\infty}\cosh s\sinh s\cdot
\Vert \alpha_M(s)\Vert\\=&\sqrt{n(a^2-\frac{1}{4})}\lim_{s\to\infty}\frac{\frac{1}{2}\sinh 2s}{a\cosh 2s-\frac{1}{2}}\\&=\sqrt{n(a^2-\frac{1}{4})}\lim_{s\to\infty}\frac{\cosh 2s}{2a\sinh 2s}= \frac{\sqrt{n(a^2-\frac{1}{4})}}{2a} <\infty.
\end{aligned}
 \end{equation}

\end{example}

%%%%%%%%%%%%%%%%%%%%%%%%%%%%%%%%%
\subsection{Sectional curvature of the extrinsic spheres in tamed submanifolds}\label{subsec2.3}
%%%%%%%%%%%%%%%%%%%%%%%%%%%%%%%%%%
Let $M$ be a non-compact Riemannian manifold and $K\subset M$ a compact subset $K\subset M$. An \emph{end $V$ of $M$ with respect to $K$} is an unbounded connected  component of $M\setminus K$. 

\begin{proposition}\label{curvature-comp}
Let $\varphi\colon \!M\!\to \!\kan$ be an  isometric immersion with  $a(M)<1$.  Then, for any end $V$ with respect to a compact set $K\subset M$  there exist $t_0\in \erre_+$ (independent of the end $V$) such that  the sectional curvatures $K_{\partial V(t)}(\pi)$ of the planes $\pi\subset T_p\partial V(t)$ tangents to the extrinsic spheres $\partial V(t):=\partial D_t\cap V$ are bounded form above and below by
\begin{equation}
K_{\partial V(t)}(\pi)\leq \kappa +\Vert \alpha \Vert^2+\frac{\left(\frac{C_\kappa}{S_\kappa}(t)+\vert \nabla^\perp r\vert\, \Vert\alpha\Vert\right)^2}{\vert \nabla^M r\vert^2},
\end{equation}

\begin{equation}
K_{\partial V(t)}(\pi)\geq \kappa -2\Vert \alpha \Vert^2+\frac{\left(\frac{C_\kappa}{S_\kappa}(t)\right)^2-2\vert \nabla^\perp r\vert\, \Vert\alpha\Vert\frac{C_\kappa}{S_\kappa}(t)}{\vert \nabla^M r\vert^2},
\end{equation}

for all $t>t_0$.
\end{proposition}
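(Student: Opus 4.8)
The plan is to derive both estimates by composing two Gauss equations: one for $\partial V(t)$ regarded as a hypersurface of $M$, and one for $M$ regarded as a submanifold of the constant curvature space $\kan$. Since $a(M)<1$, Theorem \ref{tamed-theorem} provides an $R_0$ such that $r\vert_M$ has no critical point on $M\setminus D_{R_0}$, so for $t>R_0$ the set $\partial V(t)=\partial D_t\cap V$ is a smooth embedded hypersurface of $M$ with unit normal $\nu=\nabla^M r/\vert\nabla^M r\vert$. As $\langle\nabla^M r,u\rangle=0$ for $u$ tangent to $\partial V(t)$, its scalar second fundamental form with respect to $\nu$ is $h(u,v)=\Hess^M r(u,v)/\vert\nabla^M r\vert$, and from $\vert\nabla^{\kan}r\vert=1$ together with \eqref{radiality} one has $\vert\nabla^M r\vert^2+\vert\nabla^\perp r\vert^2=1$, so all the denominators below are positive on $M\setminus D_{R_0}$.

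First I would compute $\Hess^M r$ on $T\partial V(t)$ using \eqref{hessian-com} with $f=r$ and Proposition \ref{hess-dist}: for $u,v\in T_p\partial V(t)$,
\begin{equation*}
\Hess^M r(u,v)=\frac{C_\kappa}{S_\kappa}(t)\Big(\langle u,v\rangle-\langle\nabla^{\kan}r,u\rangle\langle\nabla^{\kan}r,v\rangle\Big)+\langle\nabla^\perp r,\alpha(u,v)\rangle .
\end{equation*}
Because $u\perp\nabla^M r$ and $\nabla^\perp r\perp T_pM$, the term $\langle\nabla^{\kan}r,u\rangle$ vanishes, so for an orthonormal pair $X,Y$ spanning $\pi\subset T_p\partial V(t)$ one gets $h(X,X)=\tfrac{1}{\vert\nabla^M r\vert}\big(\tfrac{C_\kappa}{S_\kappa}(t)+\langle\nabla^\perp r,\alpha(X,X)\rangle\big)$ and $h(X,Y)=\tfrac{1}{\vert\nabla^M r\vert}\langle\nabla^\perp r,\alpha(X,Y)\rangle$; by Cauchy--Schwarz and $\vert\alpha(\cdot,\cdot)\vert\le\Vert\alpha\Vert$ on unit vectors, $\bigl\vert h(X,X)-\tfrac{1}{\vert\nabla^M r\vert}\tfrac{C_\kappa}{S_\kappa}(t)\bigr\vert\le\tfrac{\vert\nabla^\perp r\vert\,\Vert\alpha\Vert}{\vert\nabla^M r\vert}$ and $\vert h(X,Y)\vert\le\tfrac{\vert\nabla^\perp r\vert\,\Vert\alpha\Vert}{\vert\nabla^M r\vert}$.

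Next I would feed these into the two Gauss equations. Since $\kan$ has constant curvature $\kappa$, $K_M(\pi)=\kappa+\langle\alpha(X,X),\alpha(Y,Y)\rangle-\vert\alpha(X,Y)\vert^2$, hence $\kappa-2\Vert\alpha\Vert^2\le K_M(\pi)\le\kappa+\Vert\alpha\Vert^2$, and for the hypersurface $\partial V(t)\subset M$, $K_{\partial V(t)}(\pi)=K_M(\pi)+h(X,X)h(Y,Y)-h(X,Y)^2$. The upper bound follows by dropping $-h(X,Y)^2$ and using $\vert h(X,X)\vert,\vert h(Y,Y)\vert\le\tfrac{1}{\vert\nabla^M r\vert}(\tfrac{C_\kappa}{S_\kappa}(t)+\vert\nabla^\perp r\vert\Vert\alpha\Vert)$. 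For the lower bound one uses $h(X,X)h(Y,Y)\ge\tfrac{1}{\vert\nabla^M r\vert^2}(\tfrac{C_\kappa}{S_\kappa}(t)-\vert\nabla^\perp r\vert\Vert\alpha\Vert)^2$ and $-h(X,Y)^2\ge-\tfrac{\vert\nabla^\perp r\vert^2\Vert\alpha\Vert^2}{\vert\nabla^M r\vert^2}$; the two $\vert\nabla^\perp r\vert^2\Vert\alpha\Vert^2$ contributions cancel, leaving exactly $\big((\tfrac{C_\kappa}{S_\kappa}(t))^2-2\vert\nabla^\perp r\vert\Vert\alpha\Vert\tfrac{C_\kappa}{S_\kappa}(t)\big)/\vert\nabla^M r\vert^2$, as claimed.

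The one delicate point — and the reason a threshold $t_0$ appears — is that the product estimate $h(X,X)h(Y,Y)\ge\tfrac{1}{\vert\nabla^M r\vert^2}(\tfrac{C_\kappa}{S_\kappa}(t)-\vert\nabla^\perp r\vert\Vert\alpha\Vert)^2$ used in the lower bound requires $\tfrac{C_\kappa}{S_\kappa}(t)\ge\vert\nabla^\perp r\vert\,\Vert\alpha\Vert$, i.e. both $h(X,X)$ and $h(Y,Y)$ nonnegative. To obtain this for all large $t$, uniformly in $V$, I would use that $\varphi$ does not increase distances, so on $M\setminus D_t$ one has $\rho_M(x)\ge r(\varphi(x))\ge t$; since $S_\kappa/C_\kappa$ is increasing and $a(M)<1$, for $t$ large enough $\Vert\alpha(x)\Vert\le\tfrac{1+a(M)}{2}\,\tfrac{C_\kappa}{S_\kappa}(\rho_M(x))\le\tfrac{1+a(M)}{2}\,\tfrac{C_\kappa}{S_\kappa}(t)<\tfrac{C_\kappa}{S_\kappa}(t)$, while $\vert\nabla^\perp r\vert\le1$. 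Fixing the corresponding $t_0\ge R_0$ completes the argument; this positivity check is the main obstacle, the rest being bookkeeping with the two Gauss equations.
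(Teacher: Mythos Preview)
Your proof is correct and follows essentially the same route as the paper's: both compute the second fundamental form of $\partial V(t)$ in $M$ via $\Hess^M r$ (equation \eqref{hessian-com} together with Proposition \ref{hess-dist}), feed it into the composed Gauss equations, and use tameness to guarantee that $\tfrac{C_\kappa}{S_\kappa}(t)-\vert\nabla^\perp r\vert\,\Vert\alpha\Vert>0$ for $t$ large, which is precisely what legitimizes the lower product estimate. Your treatment of that positivity step (via $\rho_M\ge r\circ\varphi$ and the monotonicity of $S_\kappa/C_\kappa$) is in fact more explicit than the paper's, which simply asserts it; otherwise the arguments coincide.
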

\begin{proof}

  Suppose that $e_i,e_j$ are two orthonormal vectors of $T_p\partial V(t)$ at a point $p\in \partial V(t)$. Then the sectional curvature $ K_{\partial V(t)}(\pi)$ of the plane $\pi$ expanded by $e_i,e_j$ is, using Gauss formula, see \cite{GPGap}:
  \begin{equation}
  \begin{aligned}
  K_{\partial V(t)}(\pi)=& \kappa+\langle\alpha(e_i,e_i),\alpha(e_j,e_j)\rangle-\vert\alpha(e_i,e_j)\vert^2\\&+ \langle \alpha^{\partial V-M}(e_i,e_i),\alpha^{\partial V-M}(e_j,e_j)\rangle-\vert \alpha^{\partial V-M}(e_i,e_j)\vert ^2
  \end{aligned}
  \end{equation}

\noindent where $\alpha^{\partial V-M}$ is the second fundamental form of $\partial V(t)$ in $M$. Computing $\alpha^{\partial V-M}(x,y)$ for any two vectors in $T_p\partial V(t)$, using Propositions \ref{hessian-com} and \ref{hess-dist} we obtain,
\begin{equation}
\begin{aligned}
\alpha^{\partial V-M}(x,y)=& -\langle x,\nabla^M_y\left(\frac{\nabla^M r}{\vert \nabla^M r \vert}\right)\rangle\frac{\nabla^M r}{\vert \nabla^M r \vert}=-\Hess^Mr(x,y)\frac{\nabla^Mr}{\vert \nabla^Mr\vert^2}\\
=&-\left(\frac{C_{\kappa}}{S_{\kappa}}(t)\langle x, y \rangle+\langle \nabla^{\perp}r,\alpha(x,y)\rangle\right)\frac{\nabla^M r}{\vert \nabla^M r \vert^2}\cdot
\end{aligned}
\end{equation}

Therefore,

  \begin{equation}\label{llarga}
  \begin{aligned}
  K_{\partial V(t)}(\pi)=&\kappa +\langle\alpha(e_i,e_i),\alpha(e_j,e_j)\rangle-\vert\alpha(e_i,e_j)\vert^2\\&+  \Biggl\{\left(\frac{C_{\kappa}}{S_{\kappa}}(t)+\langle \nabla^{\perp}r,\alpha(e_i,e_i)\rangle\right)\left(\frac{C_{\kappa}}{S_{\kappa}}(t)+\langle \nabla^{\perp}r,\alpha(e_j,e_j)\rangle\right)\Biggr.\\&\Biggl.- \langle \nabla^{\perp}r,\alpha(e_j,e_j)\rangle^2\Biggr\}\frac{1}{\vert \nabla^M r \vert^2}
  \end{aligned}
  \end{equation}

Since the immersion is tamed, we have,  for $t$ large enough 
\begin{equation}
\begin{aligned}
&\frac{C_{\kappa}}{S_{\kappa}}(t)+\langle \nabla^{\perp}r,\alpha(e_i,e_i)\rangle\geq \frac{C_{\kappa}}{S_{\kappa}}(t)-\vert\nabla^{\perp}r\vert\, \Vert\alpha\Vert>0,\\
&\\
&\frac{C_{\kappa}}{S_{\kappa}}(t)+\langle \nabla^{\perp}r,\alpha(e_j,e_j)\rangle\geq \frac{C_{\kappa}}{S_{\kappa}}(t)-\vert\nabla^{\perp}r\vert\, \Vert\alpha\Vert>0.
\end{aligned}
\end{equation}
Therefore the upper bounds on the statement of the proposition follows directly from the identity (\ref{llarga}). In order to obtain the lower bounds, observe that from equality (\ref{llarga})

\begin{eqnarray}
K_{\partial V(t)}(\pi)&\geq& \kappa -2\Vert \alpha \Vert^2+\frac{\left(\frac{C_\kappa}{S_\kappa}(t)-\vert \nabla^\perp r\vert\, \Vert\alpha\Vert\right)^2-\left(\nabla^\perp r\vert\,\Vert\alpha\Vert\right)^2}{\vert \nabla^M r\vert^2}\nonumber \\
&&\\
&\geq & \kappa -2\Vert \alpha \Vert^2+\frac{\left(\frac{C_\kappa}{S_\kappa}(t)\right)^2-2\vert \nabla^\perp r\vert\, \Vert\alpha\Vert\frac{C_\kappa}{S_\kappa}(t)}{\vert \nabla^M r\vert^2}.\nonumber
\end{eqnarray}

%Hence,
%
%\begin{equation}
%K_{\partial V(t)}(\pi)\geq \kappa -2\Vert \alpha \Vert^2+\frac{\left(\frac{C_\kappa}{S_\kappa}(t)\right)^2-2\vert \nabla^\perp r\vert\, \Vert\alpha\Vert\frac{C_\kappa}{S_\kappa}(t)}{\vert \nabla^M r\vert^2}.
%\end{equation}
And the proposition follows.

\end{proof}

%%%%%%%%%%%%%%%%%%%%%%%%%%
\section{Proof of theorem \ref{Euclidean1}}\label{section-proof-euc}
%%%%%%%%%%%%%%%%%%%%%%%%%%

As we have observed in the Introduction, the assertion (1) in Theorem \ref{Euclidean1} follows from Theorem \ref{tamed-theorem}. 

In order to prove the assertion (2), let us remind that by theorem \ref{tamed-theorem}, since $\varphi\colon M\to \erre^n$ is a  tamed immersion, there exists $R_0 >0$ such that $M$ has finitely many ends $V_k\in M\setminus D_{R_0}$ and  we can work on each end separately. Let us denote
\begin{equation}
\partial V_k(t):=\partial D_t\cap V_k.
\end{equation}  

Applying too Theorem \ref{tamed-theorem}, we have that for any $t>R_0$ the extrinsic distance function has no critical points in 
$$
A^k_{R_0,t}:=\overline{\left(V_k\cap D_t\right)\setminus \left(V_k\cap D_{R_0}\right)},
$$  
so using basic Morse theory (see \cite[theorem 2.3]{Sakai} and \cite{Milnor} ), we know that $A^k_{R_0,t}$ is diffeomorphic to $\partial V_k(R_0)\times [R_0,t]$. In particular, $\partial V_k(t)$ is diffeomorphic to  $\partial V_k(R_0)$ for any $t>R_0$. Hence, by statement (4) of theorem \ref{tamed-theorem} for any $t\geq R_0$
\begin{equation}\label{diffeo}
 V_k\overset{\text{ diffeo. }}{\approx}\partial V_k(t)\times [0,\infty).
\end{equation}

Since $a(M)<\frac{1}{2}  <1$,  using Proposition \ref{curvature-comp}, there exists $t_0 >0$ such that the sectional curvatures of the tangent planes $\pi$ to $\partial V_k(t)$ (for all $t>t_0$) are bounded below and above by
%
%\begin{equation}
%\begin{aligned}
%&\frac{1}{t^2}\left(t^2\Vert\alpha\Vert^2+\frac{(1+\vert\nabla^\perp r\vert t\Vert \alpha \Vert)^2}{\vert \nabla^M r\vert^2}\right)\geq\\ 
%&K_{\partial V_k(t)}\geq \frac{1}{t^2}\left(-2t^2\Vert\alpha\Vert^2+\frac{1-2\vert\nabla^\perp r\vert t \Vert \alpha \Vert}{\vert \nabla^M r\vert^2}\right)
%\end{aligned}
%\end{equation} 
 \begin{equation*}
 \frac{1}{t^2}\left(t^2\Vert\alpha\Vert^2+\frac{(1+\vert\nabla^\perp r\vert t\Vert \alpha \Vert)^2}{\vert \nabla^M r\vert^2}\right)\geq 
K_{\partial V_k(t)}\geq \frac{1}{t^2}\left(-2t^2\Vert\alpha\Vert^2+\frac{1-2\vert\nabla^\perp r\vert t \Vert \alpha \Vert}{\vert \nabla^M r\vert^2}\right)\nonumber
\end{equation*}
 Let us consider now a quantity $c \in (a(M), \frac{1}{2})$. From the definition of $a(M)$  there exist $t_c$ such
\begin{equation}
t\Vert \alpha \Vert \leq c,
\end{equation} 
for all $t>t_c$. Therefore, for any $t>\max\{t_c,t_0, R_0\}$
%\begin{equation}
%\begin{aligned}
%&\frac{1}{t^2}\left(c^2+\frac{(1+\vert\nabla^\perp r\vert c)^2}{\vert \nabla^M r\vert^2}\right)\geq\\ 
%&K_{\partial V_k(t)}\geq \frac{1}{t^2}\left(-2c^2+\frac{1-2\vert\nabla^\perp r\vert c}{\vert \nabla^M r\vert^2}\right)
%\end{aligned}
%\end{equation} 
\begin{equation}\label{Gimeno-Palmer}
\frac{1}{t^2}\left(c^2+\frac{(1+\vert\nabla^\perp r\vert c)^2}{\vert \nabla^M r\vert^2}\right)\geq
K_{\partial V_k(t)}\geq \frac{1}{t^2}\left(-2c^2+\frac{1-2\vert\nabla^\perp r\vert c}{\vert \nabla^M r\vert^2}\right)
\end{equation} 
Taking into account that $t\Vert \alpha \Vert \leq c$, the inequalities \ref{Gimeno-Palmer} yields
\begin{equation}
\frac{1}{t^2}\left(c^2+\frac{(1+\vert\nabla^\perp r\vert c)^2}{1-\vert \nabla^\perp r\vert^2}\right)\geq 
K_{\partial V_k(t)}\geq \frac{1}{t^2}\Bigl(1-2c^2-2\vert\nabla^\perp r\vert c\Bigr),
\end{equation}
%\begin{equation}
%\begin{aligned}
%&\frac{1}{t^2}\left(c^2+\frac{(1+\vert\nabla^\perp r\vert c)^2}{1-\vert \nabla^\perp r\vert^2}\right)\geq\\ 
%&K_{\partial V_k(t)}\geq \frac{1}{t^2}\Bigl(1-2c^2-2\vert\nabla^\perp r\vert c\Bigr),
%\end{aligned}
%\end{equation} 
for any $t>\max\{t_c,t_0,R_0\}$.
Applying 
Lemma \ref{kasue} for $G(t)=\displaystyle c/t$ we have, for any $t>\max\{t_c,t_0,R_0\}$, that 
\begin{equation}\label{grad-ine}
\vert \nabla^\perp r \vert \leq \delta(t)+c(1-\frac{R_0}{t})\leq \delta(t)+c.
\end{equation}
and  
\begin{equation}\label{quasi-final}
\frac{1}{t^2}\left(c^2+\frac{\left(1+c(\delta(t)+c)\right)^2}{1-\left(\delta(t)+c)\right)^2}\right)\geq 
K_{\partial V_k(t)}\geq \frac{1}{t^2}\left(1-2c^2-2c\left(\delta(t)+c\right)\right)
\end{equation}
%
%\begin{equation}\label{quasi-final}
%\begin{aligned}
%&\frac{1}{t^2}\left(c^2+\frac{\left(1+c(\delta(t)+c)\right)^2}{1-\left(\delta(t)+c)\right)^2}\right)\geq\\ 
%&K_{\partial V_k(t)}\geq \frac{1}{t^2}\left(1-2c^2-2c\left(\delta(t)+c\right)\right)
%\end{aligned}
%\end{equation} 
%for any $t>\max\{t_c,t_0\}$.

We are dealing with lower bounds in order to prove statement (2) of Theorem \ref{Euclidean1}. Since  $\delta(t)\to 0$, when $t\to\infty$, and $c < \frac{1}{2}$, there exist $t_1>\max\{t_c,t_0, R_0\}$ such that 
\begin{equation}
1-2c^2-2c\left(\delta(t)+c\right)>0
\end{equation}
 for any $t>t_1$.

Defining the function $\Lambda^0\colon\erre_+\to \erre$ as 
\begin{equation}
\Lambda_c^0(t)\colon=1-2c^2-2c\left(\delta(t)+c\right),
\end{equation}
the lower bound for  $K_{\partial V_k(t)}$ in  inequality (\ref{quasi-final}) can be therefore written as
\begin{equation}\label{curv-baix}
\begin{aligned}
K_{\partial V_k(t)}\geq \frac{\Lambda_c^0(t)}{t^2}\geq\frac{\Lambda_c^0(t_1)}{t^2} >\,0,
\end{aligned}
\end{equation} 
for any $t>t_1$.

Now, we apply Bishop's volume comparison theorem (see \cite{Chavel} or \cite{Chavel2}), taking into account that the above inequality implies for any  unit vector $\xi \in T_p\partial V_k(t)$,
\begin{equation}
 \begin{aligned}
  \text{Ricc}_{\partial V_k(t)}(\xi,\xi)\geq &(m-2)\frac{\Lambda^0_c(t_1)}{t^2},
  \end{aligned}
\end{equation}
we conclude that
\begin{equation}
\vol (\partial V_k(t))\leq \left(\frac{1}{\Lambda^0_c(t_1)}\right)^{\frac{m-1}{2}}m\omega_mt^{m-1}.
\end{equation}

Since
\begin{equation}
\partial D_t=\overset{\E(M)}{\underset{k=1}{\bigcup}}\partial V_k(t),
\end{equation}
one concludes
\begin{equation}\label{sphere-comp}
\vol (\partial D_t)\leq \left(\frac{1}{\Lambda^0_c(t_1)}\right)^{\frac{m-1}{2}}\,\E(M)\, m\,\omega_m\,t^{m-1}.
\end{equation}
for any $t>t_1$.
Applying coarea formula (see for instance \cite{Sakai}) to the extrinsic annuls $A_{t_1,t}:=D_t\setminus D_{t_1}$, and using inequality (\ref{grad-ine})

\begin{equation}
\begin{aligned}
\vol(A_{t_1,t})&=\int_{t_1}^t\int_{\partial D_s}\left(\frac{1}{\vert \nabla^M r \vert }dV\right)ds\\
&=\int_{t_1}^t\int_{\partial D_s}\left(\frac{1}{\sqrt{1-\vert \nabla^\perp r \vert^2} }dV\right)ds\\
&\leq  \left(\frac{1}{1-(c+\delta(t_1))^2}\right)^{\frac{1}{2}} \int_{t_1}^t\vol(\partial D_s)ds\\
&\leq   \frac{\E(M)\omega_m\left(t^{m}-t_1^m\right)}{\left[1-(c+\delta(t_1))^2\right]^{\frac{1}{2}}\left(\Lambda^0_c(t_1)\right)^{\frac{m-1}{2}}}
\end{aligned}
\end{equation}

therefore,

\begin{equation}\label{ball-comp}
\vol(D_t)\leq \vol(D_{t_1})+\frac{\E(M)\omega_m\left(t^{m}-t_1^m\right)}{\left[1-(c+\delta(t_1))^2\right]^{\frac{1}{2}}\left(\Lambda^0_c(t_1)\right)^\frac{m-1}{2}}
\end{equation}

Taking limits in inequalities (\ref{sphere-comp}) and (\ref{ball-comp})
\begin{equation}
\begin{array}{rll}
\liminf_{t\to\infty}\displaystyle\frac{\vol(\partial D_t)}{m \omega_m t^{m-1}}&\leq& \displaystyle\left(\frac{1}{\Lambda^0_c(t_1)}\right)^{\frac{m-1}{2}}\E(M)
\\
&&\\
\liminf_{t\to\infty}\displaystyle\frac{\vol(D_t)}{\omega_m t^m}& \leq & \displaystyle\frac{\E(M)}{\left[1-(c+\delta(t_1))^2\right]^{\frac{1}{2}}\left(\Lambda^0_c(t_1)\right)^\frac{m-1}{2}}
\end{array}
\end{equation}
%\begin{equation}
%\begin{aligned}
%\liminf_{t\to\infty}&\frac{\vol(\partial D_t)}{m \omega_m t^{m-1}}\leq \left(\frac{1}{\Lambda^0_c(t_1)}\right)^{\frac{m-1}{2}}\E(M)
%\\
%\liminf_{t\to\infty}&\frac{\vol(D_t)}{\omega_m t^m} \leq \frac{\E(M)}{\left[1-(c+\delta(t_1))^2\right]^{\frac{1}{2}}\left(\Lambda^0_c(t_1)\right)^\frac{m-1}{2}}
%\end{aligned}
%\end{equation}

Letting $t_1\to \infty$ and taking into account $\Lambda^0_c(t_1)\to 1-4c^2$ we have that 
\begin{equation}
\begin{array}{rll}
\liminf_{t\to\infty}\displaystyle\frac{\vol(\partial D_t)}{m \omega_m t^{m-1}}&\leq& \left(\displaystyle\frac{1}{1-c^2}\right)^{\frac{m-1}{2}}\E(M)\\
&&\\
\liminf_{t\to\infty}\displaystyle\frac{\vol(D_t)}{\omega_m t^m} &\leq&\displaystyle \frac{\E(M)}{\left[1-c^2\right]^{\frac{1}{2}}\left(1-4c^2\right)^\frac{m-1}{2}}
\end{array}
\end{equation}
%\begin{equation}
%\begin{aligned}
%\liminf_{t\to\infty}&\frac{\vol(\partial D_t)}{m \omega_m t^{m-1}}\leq \left(\frac{1}{1-c^2}\right)^{\frac{m-1}{2}}\E(M)\\
%\liminf_{t\to\infty}&\frac{\vol(D_t)}{\omega_m t^m} \leq \frac{\E(M)}{\left[1-c^2\right]^{\frac{1}{2}}\left(1-4c^2\right)^\frac{m-1}{2}}
%\end{aligned}
%\end{equation}
 Since the above inequalities are true for any $c\in (a(M), \frac{1}{2})$ the desired inequalities of statement (2) of the Theorem \ref{Euclidean1} follow when $c$ goes to $a(M)$.
\bigskip
%\bigskip

In order to prove statements (3) and (4) of the theorem, let us define, for $t>0$ and for all $c \in [a(M), \frac{1}{2})$
$$
K_{max}(t):=\frac{1}{t^2}\left(c^2+\frac{\left(1+c(\delta(t)+c)\right)^2}{1-\left(\delta(t)+c)\right)^2}\right).
$$
and 
$$
K_{min}(t):=\frac{1}{t^2}\left(1-2c^2-2c\left(\delta(t)+c\right)\right).
$$

We shall prove that, for $t$ large enough and when $a(M) <\left[\frac{23-\sqrt{337}}{32}\right]^\frac{1}{2} < \frac{1}{2}$, the sectional curvatures of the boundary of each end, $\partial V_k(t):=\partial D_t\cap V_k$ satisfy the pinching:

\begin{equation}\label{sphere-type}
K_{max}(t)\geq K_{\partial V_k(t)}> \frac{1}{4}K_{max}(t)>0.
\end{equation}

\noindent and then, we apply Synge's Theorem and either the Rauch-Berger Sphere Theorem or the Brendle-Schoen differentiable sphere theorem, if $m\geq 5$, splitting the proof in two cases, according to parity dimension of $\partial V_k(t)$. First of all, we know that, in all cases, $\partial V_k(t)$ is orientable, because there exist a everywhere non vanishing smooth normal vector field $\frac{\nabla^M r}{\vert \nabla^M r\vert}$ globally defined on $\partial V_k(t)$.

In assertion (3), we assume that dimension $m$ of the submanifold $M$ is odd, so $\partial V_k(t)$ is even dimensional. By Synge's Theorem (see \cite[Corollary 3.10, Chap. 9]{DoCarmo2}), $\partial V_k(t)$ is simply connected. Taking into account the inequality (\ref{sphere-type}) and the Rauch-Berger Sphere Theorem (see \cite[Theorem 1.1, Chap. 13]{DoCarmo2}), $\partial V_k(t)$ is homeomorphic to $\mathbb{S}^{m-1}$. 
If $m-1\geq 4$ we apply  Brendle-Schoen Differentiable Sphere Theorem, see \cite{Brendle-Schoen}, to see that $\partial V_k(t)$ is diffeomorphic to $\mathbb{S}^{m-1}$.   Since   $V_k\overset{\text{ diffeo. }}{\approx}\partial V_k(t)\times [0,\infty)$, see equation \eqref{diffeo}, the statement (3) of Theorem \ref{Euclidean1} is proven.

In assertion (4),  we assume that dimension $m$ of the submanifold $M$ is even, so $\partial V_k(t)$ is odd dimensional. Moreover, we assume that each end $V_k\overset{\text{ diffeo. }}{\approx}\partial V_k(t)\times [0,\infty)$ is simply connected, so also it is $\partial V_k(t)$. As $\partial V_k(t)$ is also orientable, we apply either the Rauch-Berger Sphere Theorem or  Brendle-Schoen Differentiable Sphere Theorem, observing that in order to have $m-1\geq 4 $ and $m$ even  then $m\geq 6$, to obtain the proof of assertion (4). 

We are going now to prove that the sectional curvatures of $\partial V_k(t)$ are pinched as in (\ref{sphere-type}).
 First of all, observe that,  given any $c\in [a(M), \frac{1}{2})$, as $a(M) \leq c$, there exist $t_c$ such
\begin{equation}
t\Vert \alpha \Vert \leq c,
\end{equation} 
for all $t>t_c$. Therefore, for any $t>\max\{t_c,t_0, R_0\}$, we can repeat the argument that leads to inequalities (\ref{quasi-final}) to obtain, for all $c\in [a(M), \frac{1}{2})$:
\begin{equation}\label{quasi-final2}
\frac{1}{t^2}\left(c^2+\frac{\left(1+c(\delta(t)+c)\right)^2}{1-\left(\delta(t)+c)\right)^2}\right)\geq 
K_{\partial V_k(t)}\geq \frac{1}{t^2}\left(1-2c^2-2c\left(\delta(t)+c\right)\right)
\end{equation}
%
%
%\begin{equation}\label{quasi-final2}
%\begin{aligned}
%&\frac{1}{t^2}\left(c^2+\frac{\left(1+c(\delta(t)+c)\right)^2}{1-\left(\delta(t)+c)\right)^2}\right)\geq\\ 
%&K_{\partial V_k(t)}\geq \frac{1}{t^2}\left(1-2c^2-2c\left(\delta(t)+c\right)\right)
%\end{aligned}
%\end{equation} 
for any $t>\max\{t_c,t_0, R_0\}$.
Define, for any $c\in [a(M),\frac{1}{2})$ and any $t>\max\{t_c,t_0, R_0\}$ the function
\begin{equation*}
F(c,t):= \displaystyle\frac{1-2c^2-2c\left(\delta(t)+c\right)}{c^2+\displaystyle\frac{\left(1+c(\delta(t)+c)\right)^2}{1-\left(\delta(t)+c)\right)^2}}
\end{equation*}

\noindent Hence, by inequality (\ref{quasi-final2})
\begin{equation}
K_{max}(t)\geq K_{\partial V_k(t)}\geq K_{min}(t)=F(c,t)K_{max}(t)
\end{equation}
for all $t>\max\{t_c,t_0, R_0\}$.
On the other hand, we have that, for all $c\in [a(M),\frac{1}{2})$,
\begin{equation*}
F(c,\infty)= \lim_{t\to\infty}\displaystyle\frac{1-2c^2-2c\left(\delta(t)+c\right)}{c^2+\displaystyle\frac{\left(1+c(\delta(t)+c)\right)^2}{1-\left(\delta(t)+c)\right)^2}}=\frac{1-4c^2}{c^2+\displaystyle\frac{\left(1+c^2\right)^2}{1-c^2}}\cdot
\end{equation*}

It is straightforward that $0 < F(c,\infty) \leq 1$ for all $c \in [0, \frac{1}{2})$. On the other hand, $\frac{d}{dc} F(c, \infty) <0, \,\,\,\forall c < \frac{1}{2}$, as it is easy to check, the function $F(c,\infty)$ is strictly decreasing  in $c \in [0, \frac{1}{2})$. Hence, let us choose $c^{*}=\left[\frac{23-\sqrt{337}}{32}\right]^\frac{1}{2}< \frac{1}{2}$ such that $F(c^*, \infty)=\frac{1}{4}\cdot$
\smallskip

Then, for all $c \in [a(M), c^*)$, $F(c,\infty) > F(c^*, \infty)=\frac{1}{4}$.
Let us fix  $c_0 \in [a(M), c^*)$. Then, given $\epsilon >0$, there exists $t_{\epsilon}$ such that for all $t > t_{\epsilon}$,
\begin{equation}
F(c_0,t) \geq F(c_0,\infty)-\epsilon
\end{equation}
thus, for any $t>\max\{t_{\epsilon},t_{c_0},t_0, R_0\}$, we have
\begin{equation}\label{ineqs}
K_{max}(t)\geq K_{\partial V_k(t)}\geq K_{min}(t)=F(c_0,t)K_{max}(t) \geq (F(c_0,\infty)-\epsilon)K_{max}(t).
\end{equation}

Let us choose 
$$\epsilon:= F(c,\infty)-F(c^*,\infty) >0$$ and, for this $\epsilon$, for $t$ large enough, we have, from (\ref{ineqs}),

\begin{equation}\label{ineqs2}
K_{max}(t)\geq K_{\partial V_k(t)} > \frac{1}{4}K_{max}(t)
\end{equation} and the sectional curvature pinching of $\partial V_k(t)$ is proven.

Moreover, if $a(M)<\frac{1}{2}$ and $m=3$ by using inequality (\ref{curv-baix}) and the Gauss-Bonnet Theorem, since the surface $\partial V_k(t)$ has positive curvature, the surface $\partial V_k(t)$ is homeomorphic to a sphere, and we can state

\begin{corollary}\label{dim3-theo}
Let $\varphi\colon M^3
\hookrightarrow \erre^n$ be an isometric immersion of a complete Riemannian
$3$-manifold $M$ into a $n$-dimensional Euclidean space $\erre^n$. Then, if  $a(M)<\frac{1}{2}$, each end of $M$ is homeomorphic to $\mathbb{S}^{2}\times \erre$.
\end{corollary}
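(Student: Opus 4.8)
The plan is to run the argument already set up for statement (2) of Theorem \ref{Euclidean1}, but to exploit that in dimension $m-1=2$ one can replace the sphere theorems, and with them the pinching hypothesis, by the Gauss--Bonnet theorem: a connected closed orientable surface of positive curvature is the $2$-sphere.

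First I would record the structure coming from Theorem \ref{tamed-theorem}, which applies since $a(M)<\tfrac12<1$: there is $R_0>0$ so that $r|_M$ has no critical points in $M\setminus D_{R_0}$, the complement $M\setminus D_{R_0}$ is a finite disjoint union of ends $V_k$, and, through the Morse-theoretic identification giving \eqref{diffeo}, each $V_k$ is diffeomorphic to $\partial V_k(t)\times[0,\infty)$ for every $t\ge R_0$, with $\partial V_k(t)$ connected (being diffeomorphic to the connected component $\partial D_{R_0}^k$ of $\partial D_{R_0}$). Since $\varphi$ is proper, $D_t$ is precompact, so $\partial V_k(t)$ is a compact boundaryless surface; it is orientable because $\nabla^M r/\vert\nabla^M r\vert$ is a global unit normal field along it. Next I would invoke the curvature estimate \eqref{curv-baix} verbatim: fixing $c\in(a(M),\tfrac12)$ and combining Proposition \ref{curvature-comp} with the gradient bound of Lemma \ref{kasue} for $G(t)=c/t$, there is $t_1>R_0$ with $K_{\partial V_k(t)}\ge \Lambda_c^0(t_1)/t^2>0$ for all $t>t_1$. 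Fix such a $t$.

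Gauss--Bonnet then gives $2\pi\,\chi(\partial V_k(t))=\int_{\partial V_k(t)}K\,dA>0$, hence $\chi(\partial V_k(t))>0$; as $\partial V_k(t)$ is a connected closed orientable surface this forces genus $0$, i.e., $\partial V_k(t)\cong\mathbb{S}^2$. Plugging this back into \eqref{diffeo} yields $V_k\approx\mathbb{S}^2\times[0,\infty)$, so each end of $M$ is homeomorphic to $\mathbb{S}^2\times\erre$. I do not expect a genuine obstacle: all the analytic content (properness, the no-critical-point radius $R_0$, and the lower curvature bound \eqref{curv-baix}) is already in hand from the proof of Theorem \ref{Euclidean1}, and the only extra ingredient is the two-dimensional Gauss--Bonnet theorem, which is precisely what allows the higher-dimensional pinching requirement $a(M)<[(23-\sqrt{337})/32]^{1/2}$ to be relaxed to $a(M)<\tfrac12$ when $m=3$. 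The one mild point to be careful about is ruling out $\mathbb{RP}^2$, which is handled by the orientability of the extrinsic spheres $\partial V_k(t)$.
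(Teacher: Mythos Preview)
Your proposal is correct and follows essentially the same approach as the paper: you invoke the positivity of $K_{\partial V_k(t)}$ from \eqref{curv-baix}, apply Gauss--Bonnet to the compact orientable surface $\partial V_k(t)$ to conclude it is $\mathbb{S}^2$, and then use \eqref{diffeo}. The paper's own argument is a one-line observation to the same effect; your version simply spells out the orientability and the exclusion of $\mathbb{RP}^2$ more carefully.
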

Actually, if $a(M)<\frac{1}{2}$ inequality (\ref{curv-baix}) implies for any dimension $m>2$ that the sectional curvature of $\partial V_k(t)$ is positive. By the first Betti number Theorem (see \cite{Jost}) and corollary 2.5 of \cite{Gromov-Spin} we can state
\begin{corollary}\label{betti}
Let $\varphi\colon M^m
\hookrightarrow \erre^n$ be an isometric immersion of a complete Riemannian
$m$-manifold $M$ into a $n$-dimensional Euclidean space $\erre^n$. Then, if  $a(M)<\frac{1}{2}$, each end $V_k(M)$ of $M$ is homeomorphic to $\partial V_k\times \erre$, where $\partial V_k$ is a $m-1$ compact manifold such that
\begin{enumerate}
\item $\partial V_k$ has zero first Betti number, $b_1(\partial V_k)=0$.
\item $\partial V_k$ is not homeomorphic to the $m-1$ torus $T^{m-1}$.
\end{enumerate} 
\end{corollary}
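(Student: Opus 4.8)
The plan is to extract from the already-proved curvature estimate (\ref{curv-baix}) the strict positivity of the sectional curvature of each boundary hypersurface $\partial V_k(t)$, and then to feed this into two classical results: a first Betti number theorem of Bochner type, and the Gromov--Lawson obstruction to positive scalar curvature on the torus.

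First I would recall the structural picture already in hand: since $a(M)<\frac{1}{2}<1$ the immersion is tamed, so by Theorem \ref{tamed-theorem} (equivalently part (1) of Theorem \ref{Euclidean1}) there is $R_0>0$ with $M\setminus D_{R_0}$ a finite disjoint union $\bigcup_k V_k$ of ends, and by \eqref{diffeo} each $V_k$ is diffeomorphic to $\partial V_k(t)\times[0,\infty)$ for every $t>R_0$. Fix $t$ large enough that (\ref{curv-baix}) holds and put $\partial V_k:=\partial V_k(t)$; this is a closed $(m-1)$-manifold, orientable since $\nabla^M r/\vert\nabla^M r\vert$ is a globally defined unit normal along it. By (\ref{curv-baix}) we have $K_{\partial V_k}\geq \Lambda_c^0(t_1)/t^2>0$, so $\partial V_k$ carries a metric of strictly positive sectional curvature, and since it is compact this lower bound is a positive constant; hence, using $m-1\geq 2$, both the Ricci curvature and the scalar curvature of $\partial V_k$ are bounded below by positive constants.

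For part (1) I would then apply the first Betti number theorem (\cite{Jost}): a closed Riemannian manifold of positive Ricci curvature has vanishing first Betti number, so $b_1(\partial V_k)=0$. (Equivalently, Myers' theorem forces $\pi_1(\partial V_k)$ to be finite, hence $H_1(\partial V_k;\mathbb{Z})$ finite and $b_1(\partial V_k)=0$.) For part (2) I would use that $\partial V_k$ admits a metric of positive scalar curvature together with \cite[Corollary 2.5]{Gromov-Spin}: the torus $T^{m-1}$, and any closed manifold homeomorphic to it, carries no metric of positive scalar curvature; hence $\partial V_k$ cannot be homeomorphic to $T^{m-1}$. Combining with \eqref{diffeo} gives $V_k\approx\partial V_k\times\erre$ with $\partial V_k$ satisfying (1) and (2). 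When $m=3$ the conclusion is in any case immediate from Gauss--Bonnet, since $\partial V_k$ is then a closed surface of positive curvature, hence $\mathbb{S}^2$ (cf. Corollary \ref{dim3-theo}).

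I do not expect a genuine obstacle: the analytic heavy lifting is contained in (\ref{curv-baix}), so what remains is an assembly of standard theorems. The main point requiring care is keeping the bookkeeping straight --- that $\dim\partial V_k=m-1\geq 2$, so that positive sectional curvature truly upgrades to positive Ricci and positive scalar curvature, and that the positive lower curvature bound is taken on the \emph{compact} manifold $\partial V_k=\partial V_k(t)$ at a \emph{fixed} radius $t$, even though the bound in (\ref{curv-baix}) degenerates as $t\to\infty$; for that fixed $t$ the bound is uniform, which is all that Bochner's and Gromov--Lawson's theorems require.
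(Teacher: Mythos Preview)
Your proposal is correct and follows essentially the same route as the paper: use (\ref{curv-baix}) to obtain strictly positive sectional curvature on $\partial V_k(t)$ for large $t$, then invoke the first Betti number theorem from \cite{Jost} for (1) and \cite[Corollary 2.5]{Gromov-Spin} for (2). Your write-up is in fact more careful than the paper's one-line justification, explicitly checking orientability, the dimension constraint $m-1\geq 2$, and that the curvature bound at a fixed $t$ is uniform on the compact $\partial V_k(t)$.
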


%%%%%%%%%%%%%%%%%%%%%%%
\section{Proof of theorem \ref{Hyperbolic1}}\label{section-proof-hyp}
%%%%%%%%%%%%%%%%%%%%%%%%%%%

This proof follows the lines of the proof in Section \S \ref{section-proof-euc}. 
As we have observed in \S \ref{section-proof-euc}, assertion (1) of the Theorem follows from Theorem \ref{tamed-theorem}. 

In order to prove assertion (2), we have, as before, a finite number of ends $V_k\in M\setminus D_{R_0}$ with boundaries $\partial V_k(t):=\partial D_t\cap V_k$ and  we work on each end separately. 
Taking into account that for any $t>R_0$ the extrinsic distance function has no critical points in 
$$
A^k_{R_0,t}:=\overline{\left(V_k\cap D_t\right)\setminus \left(V_k\cap D_{R_0}\right)},
$$  
we use Morse theory (see \cite[theorem 2.3]{Sakai} and \cite{Milnor} ), to have that $A^k_{R_0,t}$ is diffeomorphic to $\partial V_k(R_0)\times [R_0,t]$. In particular, $\partial V_k(t)$ is diffeomorphic to  $\partial V_k(R_0)$ for any $t>R_0$. Hence, by statement (4) of theorem \ref{tamed-theorem} for any $t\geq R_0$
\begin{equation}\label{diffeo2}
 V_k\overset{\text{ diffeo. }}{\approx}\partial V_k(t)\times [0,\infty).
\end{equation}

Since $b(M) < \infty$, then $a(M) <1$, so using again Proposition \ref{curvature-comp}, there exists $t_0 >0$ such that

\begin{equation}
K_{\partial V_k(t)}(\pi)\geq \frac{1}{S_\kappa^2}(t)-\left(\frac{C_\kappa}{S_\kappa}(t)\right)^2 -2\Vert \alpha \Vert^2+\frac{\left(\frac{C_\kappa}{S_\kappa}(t)\right)^2-2\vert \nabla^\perp r\vert\, \Vert\alpha\Vert\frac{C_\kappa}{S_\kappa}(t)}{\vert \nabla^M r\vert^2}.
\end{equation}
for all $t>\max\{t_0, R_0\}$. Hence, 
\begin{equation}
\begin{aligned}
K_{\partial V_k(t)}(\pi)\geq   \left(\frac{1}{S_\kappa(t)}\right)^2\Biggl(\Biggr. & \left. 1-\left({C_\kappa}(t)\right)^2 -  2\left({S_\kappa}(t)\Vert \alpha \Vert\right)^2\right.\\
& +\left.\frac{\left({C_\kappa}(t)\right)^2-2\vert \nabla^\perp r\vert\, \Vert\alpha\Vert{C_\kappa}{S_\kappa}(t)}{\vert \nabla^M r\vert^2}\right).
\end{aligned}
\end{equation}
 
Let $b^*$ be such that $b(M)<b^*<\infty$. For any $c\in (b(M),b^*)$ there exist therefore $t_c$ such that, for all $t >t_c$,
\begin{equation}
\Vert \alpha \Vert \leq \frac{c}{S_\kappa(t)C_\kappa(t)}
\end{equation}

\noindent and hence, for any $t> \max\{t_c,t_0, R_0\}$,

\begin{equation}
\begin{aligned}
K_{\partial V_k(t)}(\pi)\geq \left(\frac{1}{S_\kappa(t)}\right)^2\left(1-\left({C_\kappa}(t)\right)^2 -2\left(\frac{c}{{C_\kappa}(t)}\right)^2\right.
\left.+\frac{\left({C_\kappa}(t)\right)^2-2\vert \nabla^\perp r\vert\, c}{\vert \nabla^M r\vert^2}\right)\\=\left(\frac{1}{S_\kappa(t)}\right)^2\left(1 -2\left(\frac{c}{{C_\kappa}(t)}\right)^2\right.
\left.+\frac{\left({C_\kappa}(t)\right)^2\vert \nabla^\perp r\vert^2-2\vert \nabla^\perp r\vert\, c}{\vert \nabla^M r\vert^2}\right).
\end{aligned}
\end{equation}

%But  there exit $t_c>0$ such that $\left({C_\kappa}(t)\right)^2-2\vert \nabla^\perp r\vert\, c>0$ for any $t>t_1$ and hence

As $\vert \nabla^M r\vert^2 \leq 1$ and  $\left({C_\kappa}(t)\right)^2\vert \nabla^\perp r\vert^2 -2c \vert \nabla^\perp r\vert\geq 0$, we have, for all $t>\max\{t_c,t_0, R_0\}$,

\begin{equation}
\frac{\left({C_\kappa}(t)\right)^2\vert \nabla^\perp r\vert^2-2\vert \nabla^\perp r\vert\, c}{\vert \nabla^M r\vert^2}\geq {C_\kappa}(t)^2\vert \nabla^\perp r\vert^2-2\vert \nabla^\perp r\vert\, c
\end{equation}

and hence, as ${C_\kappa}(t)^2\vert \nabla^\perp r\vert^2 \geq 0$ too,

\begin{equation}
%\begin{aligned}
K_{\partial V_k(t)}(\pi)\geq \left(\frac{1}{S_\kappa(t)}\right)^2\left(1-2\left(\frac{c}{{C_\kappa}(t)}\right)^2-2\vert \nabla^\perp r\vert\, c\right).
%\end{aligned}
\end{equation}

On the other hand, as $b(M) < b^* < \infty$ and $c\in (b(M),b^*)$, we also have that for all $t >\max\{t_c,t_0, R_0\}$, 
\begin{equation}
\Vert \alpha \Vert \leq \frac{c}{S_\kappa(t)C_\kappa(t)}\leq \frac{c}{S_\kappa(t)}
\end{equation}

\noindent so applying Lemma \ref{kasue} with $G(t)=c/S_\kappa(t)$ for $t> \max\{t_c,t_0, R_0\}$ we obtain\begin{equation}
\vert \nabla^\perp r\vert \leq \delta(t)+c\, \frac{t-R_0}{S_\kappa(t)}=u_c(t).
\end{equation}

Since $\delta(t)$ is a decreasing function and $\gamma(t)\colon=\displaystyle c\cdot \frac{t-R_0}{S_\kappa(t)}$ is also a decreasing function for a sufficiently large $t$, the right side function $u_c(t):=\delta(t)+c\frac{t-R_0}{S_\kappa(t)}$ is  decreasing for  a sufficiently large $t$, so there exists $t_1 > \max\{t_c,t_0, R_0\}$ such that, for all $t >t_1$ we have

\begin{equation}\label{gradient2}
\vert \nabla^\perp r\vert \leq u_c(t) \leq u_c(t_1).
\end{equation}
and hence one obtains,
\begin{equation}\label{curvhyp}
\begin{aligned}
K_{\partial V_k(t)}(\pi)\geq &\left(\frac{1}{S_\kappa(t)}\right)^2\Lambda_c(t_1)\, >0.
\end{aligned}
\end{equation}
where now
\begin{equation}
\Lambda_c(t)=1-2\left(\frac{c}{{C_\kappa}(t)}\right)^2-2c\,u_c(t).
\end{equation}

By using the Bishop Volume Comparison Theorem, one concludes that, for all $t> t_1$
\begin{equation}
\vol (\partial V_k(t))\leq \left(\frac{1}{\Lambda_c(t_1)}\right)^{\frac{m-1}{2}}\vol(S_t^{\kappa,m-1}),
\end{equation}
obtaining therefore the following lower estimate for the number of ends
\begin{equation}\label{sphere-comp-hyp}
\vol (\partial D_t)\leq \left(\frac{1}{\Lambda_c(t_1)}\right)^{\frac{m-1}{2}}\E(M)\vol(S_t^{\kappa,m-1}).
\end{equation}

Similarly that in \S \ref{section-proof-euc} by using again the coarea formula

\begin{equation}\label{ball-comp-2}
\begin{aligned}
\vol(D_t)\leq \vol(D_{t_1})+\frac{\E(M)\vol(B_t^{\kappa,m}\setminus B_{t_1}^{\kappa,m})}{\left[1-(u(t_1))^2\right]^{\frac{1}{2}}\left(\Lambda_c(t_1)\right)^\frac{m-1}{2}}
\end{aligned}
\end{equation}

And therefore, taking limits in inequalities (\ref{sphere-comp-hyp}) and (\ref{ball-comp-2})

\begin{equation}
\begin{aligned}
\liminf_{t\to\infty}&\frac{\vol(\partial D_t)}{\vol(S_t^{\kappa,m-1})}\leq \left(\frac{1}{\Lambda_c(t_1)}\right)^{\frac{m-1}{2}}\E(M)\\
\liminf_{t\to\infty}&\frac{\vol(D_t)}{\vol(B_t^{\kappa,m})} \leq \frac{\E(M)}{\left[1-(u(t_1))^2\right]^{\frac{1}{2}}\left(\Lambda_c(t_1)\right)^\frac{m-1}{2}}.
\end{aligned}
\end{equation}

Letting $t_1\to \infty$ in the above inequalities and taking into account that
\begin{equation}
\begin{aligned}
&\lim_{t\to\infty}u(t)=0\\
&\lim_{t\to\infty}\Lambda_c(t)=1.
\end{aligned}
\end{equation}
the desired inequalities of statement (2) of theorem \ref{Hyperbolic1} follow.

\smallskip
In order to obtain the upper bound for the fundamental tone we only have to take into account that the geodesic ball $B_t^M$ of radius $t$ is a subset of the extrinsic ball $D_t$ of the same radius $t$ because the extrinsic distance is always less or equal to the geodesic intrinsic distance in isometric immersions. Hence, by inequality (\ref{ball-comp-2})

\begin{equation}
\begin{aligned}
\vol(B_t^M)\leq \vol(D_{t_1})+\frac{\E(M)\vol(B_t^{\kappa,m}\setminus B_{t_1}^{\kappa,m})}{\left[1-(u(t_1))^2\right]^{\frac{1}{2}}\left(\Lambda_c(t_1)\right)^\frac{m-1}{2}}
\end{aligned}
\end{equation}

then

\begin{equation}
\limsup_{t\to\infty}\frac{\log\left(\vol(B^M_t(x_0)\right)}{t}\leq \left(m-1\right)\sqrt{-\kappa}
\end{equation}
Finally by using \cite[inequality (10.3)]{GriExp} the upper bound is proved.

Moreover and in the same way than in Theorem \ref{Euclidean1}, if $b(M)<\infty$ and $m=3$ by using inequality (\ref{curvhyp}) and the Gauss-Bonnet Theorem, since the surface $\partial V_k(t)$ has positive curvature for a sufficiently large $t$, the surface $\partial V_k(t)$ is homeomorphic to an sphere, and we can state

\begin{corollary}\label{dim3-theoHyp}
Let $\varphi\colon\! M^3\!
\hookrightarrow \Han$ be an isometric immersion of a complete Riemannian
$3$-manifold $M$ into a $n$-dimensional Hyperbolic space $\Han$. Then, if  $b(M)<\infty$, each end of $M$ is homeomorphic to $\mathbb{S}^{2}\times \erre$.
\end{corollary}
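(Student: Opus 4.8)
The plan is to repeat, in the hyperbolic setting, the argument by which Corollary~\ref{dim3-theo} is deduced from the proof of Theorem~\ref{Euclidean1}: extract the positivity of the curvature of the extrinsic spheres from the estimates already established in the proof of Theorem~\ref{Hyperbolic1}, and then read off the topology of a two-dimensional extrinsic sphere from Gauss--Bonnet.

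First I would note that $b(M)<\infty$ forces $a(M)<1$ by Remark~\ref{f}, so Theorem~\ref{tamed-theorem} applies: $\varphi$ is proper, $M$ has finite topology, and there is $R_0$ such that $M\setminus D_{R_0}$ is a finite disjoint union $\cup_k V_k$ of ends, with $r|_M$ free of critical points outside $D_{R_0}$ and each $V_k$ diffeomorphic to $\partial V_k(t)\times[0,\infty)$ for every $t\ge R_0$, where $\partial V_k(t)=\partial D_t\cap V_k$; moreover each such end carries exactly one boundary component, so $\partial V_k(t)$ is connected. Since $\varphi$ is proper, $\partial D_t$, and hence $\partial V_k(t)$, is compact, and, exactly as in the proof of Theorem~\ref{Euclidean1}, $\partial V_k(t)$ is orientable because $\nabla^M r/\vert\nabla^M r\vert$ is a globally defined nowhere vanishing normal vector field on it. As $m=3$, each $\partial V_k(t)$ is therefore a closed, connected, orientable surface.

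Next I would invoke inequality~\eqref{curvhyp} from the proof of Theorem~\ref{Hyperbolic1}: choosing $c\in(b(M),b^*)$ as there, one obtains $t_1$ such that for every $t>t_1$ the sectional curvature of $\partial V_k(t)$ satisfies $K_{\partial V_k(t)}\ge S_\kappa(t)^{-2}\,\Lambda_c(t_1)>0$. Since $\partial V_k(t)$ is two-dimensional, this sectional curvature is its Gauss curvature, so for any fixed $t>t_1$ the surface $\partial V_k(t)$ has strictly positive Gauss curvature; by the Gauss--Bonnet theorem it has positive Euler characteristic and is therefore homeomorphic to $\mathbb{S}^2$. Fixing such a $t$ and combining with \eqref{diffeo2}, which gives $V_k\overset{\text{diffeo.}}{\approx}\partial V_k(t)\times[0,\infty)$, we conclude that each end is homeomorphic to $\mathbb{S}^2\times\erre$, which is the assertion of the corollary.

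There is no genuine obstacle here: all the analytic input --- the Hessian comparison of Propositions~\ref{hessian-com} and \ref{hess-dist}, the gradient estimate of Lemma~\ref{kasue}, and the curvature comparison of Proposition~\ref{curvature-comp} --- has already been assembled in the proof of Theorem~\ref{Hyperbolic1}, and the only point needing a word of care is that positivity of the curvature is guaranteed only for large $t$. This is harmless, because the surfaces $\partial V_k(t)$ are mutually diffeomorphic for all $t\ge R_0$, so their common topological type can be read off at any single sufficiently large value of $t$.
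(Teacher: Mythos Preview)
Your proof is correct and follows essentially the same approach as the paper: use the positivity of the sectional curvature of $\partial V_k(t)$ for large $t$ furnished by inequality~\eqref{curvhyp}, apply the Gauss--Bonnet theorem to conclude the surface is a sphere, and then invoke the diffeomorphism~\eqref{diffeo2} to identify each end with $\mathbb{S}^2\times[0,\infty)$. You supply more detail than the paper (orientability, compactness, connectedness of $\partial V_k(t)$, the observation that the topological type is constant in $t$), but the argument is the same.
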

Actually, if $b(M)<\infty$ then inequality (\ref{curvhyp}) implies for any dimension $m>2$ that the sectional curvature of $\partial V_k(t)$ is positive for a sufficiently large $t$. As in Corollary \ref{betti}, by the first Betti number Theorem (see \cite{Jost}) and corollary 2.5 of \cite{Gromov-Spin} we can state
\begin{corollary}\label{bettiHyp}
Let $\varphi\colon\! M^m\!
\hookrightarrow \!\Han$ be an isometric immersion of a complete Riemannian
$m$-manifold $M$ into a $n$-dimensional Hyperbolic space $\Han$. Then, if  $b(M)<\infty$, each end $V_k(M)$ of $M$ is homeomorphic to $\partial V_k\times \erre$, where $\partial V_k$ is a $m-1$ compact manifold such that
\begin{enumerate}
\item $\partial V_k$ has zero first Betti number, $b_1(\partial V_k)=0$.
\item $\partial V_k$ is not homeomorphic to the $m-1$ torus $T^{m-1}$.
\end{enumerate} 
\end{corollary}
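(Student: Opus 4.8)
The plan is to run, in the hyperbolic setting, exactly the argument already used for Corollary \ref{betti}, now feeding it with the positivity of the sectional curvature of the extrinsic spheres that was established along the proof of Theorem \ref{Hyperbolic1}. Since $b(M)<\infty$ forces $a(M)=0<1$ (Remark \ref{f}), the immersion $\varphi$ is tamed, so Theorem \ref{tamed-theorem} supplies $R_0>0$ with $M\setminus D_{R_0}$ a finite disjoint union of ends $V_k$, each satisfying $V_k\overset{\text{diffeo.}}{\approx}\partial V_k(t)\times[0,\infty)$ for every $t\geq R_0$, as in \eqref{diffeo2}. The Morse-theoretic argument recalled there also shows that $\partial V_k(t)$ is diffeomorphic to $\partial V_k(R_0)$ for all $t>R_0$, and that each $\partial V_k(t)$ is a closed orientable $(m-1)$-manifold, oriented by the unit normal $\nabla^M r/\vert\nabla^M r\vert$. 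We therefore fix $\partial V_k:=\partial V_k(t)$ for $t$ large, and it suffices to verify assertions (1) and (2) for this manifold.

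Next I would quote inequality \eqref{curvhyp}: for $c\in(b(M),b^*)$ there are $t_1>0$ and a function $\Lambda_c$ with $\Lambda_c(t)\to 1$ as $t\to\infty$ such that, for all $t>t_1$ and all tangent planes $\pi\subset T_p\partial V_k(t)$ (here $m\geq 3$, so such planes exist),
\begin{equation*}
K_{\partial V_k(t)}(\pi)\ \geq\ \left(\frac{1}{S_\kappa(t)}\right)^{2}\Lambda_c(t_1)\ >\ 0 .
\end{equation*}
Hence, for every sufficiently large $t$, the metric on $\partial V_k(t)$ induced by $\varphi$ has strictly positive sectional curvature; since $m-1\geq 2$, its Ricci curvature (a sum of such sectional curvatures) and its scalar curvature are then strictly positive as well.

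Assertion (1) is then immediate from the first Betti number (Bochner) theorem, see \cite{Jost}: a closed Riemannian manifold of positive Ricci curvature has $b_1=0$, so $b_1(\partial V_k)=b_1(\partial V_k(t))=0$. For assertion (2), since $\partial V_k(t)$ admits a metric of positive scalar curvature, it cannot be homeomorphic (a fortiori diffeomorphic) to the torus $T^{m-1}$ by \cite[Corollary 2.5]{Gromov-Spin}, which asserts that $T^{m-1}$ carries no such metric; hence $\partial V_k$ is not homeomorphic to $T^{m-1}$. Combining this with $V_k\overset{\text{diffeo.}}{\approx}\partial V_k\times[0,\infty)\approx\partial V_k\times\erre$ finishes the proof.

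There is no genuine obstacle beyond what is already present in the proof of Theorem \ref{Hyperbolic1}; the only points to be careful about are that the curvature lower bound in \eqref{curvhyp} must be seen to hold uniformly for all $t$ in a full neighbourhood of infinity, so that the diffeomorphism type $\partial V_k$ really is represented by a manifold carrying positive curvature; that one should pass explicitly from positive sectional to positive scalar curvature before invoking \cite[Corollary 2.5]{Gromov-Spin}; and that the identification $\partial V_k(t)\cong\partial V_k(t')$ for large $t,t'$ makes the two conclusions independent of the chosen slice.
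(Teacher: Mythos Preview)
Your proposal is correct and follows essentially the same route as the paper: positivity of $K_{\partial V_k(t)}$ from inequality \eqref{curvhyp}, then the Bochner first Betti number theorem of \cite{Jost} for (1) and \cite[Corollary 2.5]{Gromov-Spin} for (2). You are in fact more explicit than the paper about passing from positive sectional to positive Ricci/scalar curvature and about the slice-independence of the diffeomorphism type, but these are refinements of presentation rather than a different approach.
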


%%%%%%%%%%%%%%%%%%%%%%%%%%
\section{Proof of Theorem \ref{gap1}}\label{section-proof-gap}
%%%%%%%%%%%%%%%%%%%%%%%%%%%

The submanifold  $M^m$ is simply connected and it has sectional curvatures bounded from above by $K_M\leq k \leq  0$. Then, all the points $p \in M$ are poles of $M$ and the number of ends of $M$ is $\E(M)=1$.  We apply Bishop-G\"unther's Theorem (see for instance \cite{Chavel2}), so we have, for any geodesic ball $B_R^M(p)$ of radius $R$ on $M$,
\begin{equation}\label{Bishop-Gunter}
1\leq \frac{\vol(B_R^M)}{\vol(B^{\kam}_{R})}.
\end{equation}
where, we recall that,
$B^{\kam}_{R}$ denotes the open geodesic ball
of radius $R$ in the real space form of constant sectional curvature $k \leq 0$.
Moreover,  
\[R\longrightarrow\frac{\vol(B_R^M)}{\vol(B^{\kam}_{R})}\] is an non-decreasing function on $R$ and equality in \eqref{Bishop-Gunter}  is attained  if and only if $B_R^M$ is isometric to the geodesic ball of the same radius $R$ in $\kam$. Taking into account that $B_R^M\subset D_R$, we have, for any $t >R$
\begin{equation}
\begin{aligned}
1\leq \frac{\vol(B_R^M)}{\vol(B^{\kam}_{R})}\leq\frac{\vol(B_t^M)}{\vol(B^{\kam}_{t})}\leq\frac{\vol(D_t)}{\vol(B^{\kam}_{t})}
\end{aligned}
\end{equation}

From now on, we split the proof in two cases.

\smallskip

When $k=0$, we use inequality (\ref{ball-comp}) and the fact that $\E(M)=1$ to get:

\begin{equation}
\begin{aligned}
1\leq \frac{\vol(B_R^M)}{\omega_m R^m}\leq&\frac{\vol(B_t^M)}{\omega_m t^m}\leq\frac{\vol(D_t)}{\omega_m t^m}\\
\leq &\frac{\vol(D_{t_1})}{\omega_mt^m}+\frac{\left(1-(\frac{t_1}{t})^m\right)}{\left[1-(c+\delta(t_1))^2\right]^{\frac{1}{2}}\left(\Lambda^0_c(t_1)\right)^\frac{m-1}{2}}
\end{aligned}
\end{equation}

Letting $t\to \infty$, and then $t_1\to\infty$,
\begin{equation}
\begin{aligned}
1\leq \frac{\vol(B_R^M)}{\omega_m R^m}\leq\frac{1}{\left[1-c^2\right]^{\frac{1}{2}}\left(1-4c^2\right)^\frac{m-1}{2}}
\end{aligned}
\end{equation}

Finally taking $c\to 0$,

\begin{equation}
\begin{aligned}
\frac{\vol(B_R^M)}{\omega_m R^m}=1,
\end{aligned}
\end{equation}
for any $R\in \erre_+$ and that completes the proof of the corollary, because then, for all the points $p\in M$, any geodesic $R$- ball $B^M_R(p)$ is isometric $B^{\erre^n}_R$.

When $k < 0$, we argue exactly in the same way, but using now inequality (\ref{ball-comp-2}).

\def\cprime{$'$} \def\cprime{$'$} \def\cprime{$'$} \def\cprime{$'$}
  \def\cprime{$'$}
  
  %%%%%%%%%%%%%%%%

\end{document}